\title[Residue formula and its applications]{A residue formula for meromorphic connections and applications to stable sets of foliations}
\keywords{Residue, holomorphic connection, foliation, minimal set, Levi flat, analytic continuation}
\subjclass[2020]{32S65, 37F75, 32V40, 32D15, 32D20}
\date{November 4, 2022}
\author{Masanori Adachi}
\address[M. Adachi]{Department of Mathematics, Faculty of Science, Shizuoka University.  836 Ohya, Suruga-ku, Shizuoka 422-8529, Japan.}
\email{adachi.masanori@shizuoka.ac.jp}
\author{S\'everine Biard}
\address[S. Biard]{Univ. Polytechnique Hauts-de-France, INSA Hauts-de-France, CERAMATHS-Laboratoire de Mat\'eriaux C\'eramiques et de Math\'ematiques, F-59313 Valenciennes, France}
\email{severine.biard@uphf.fr}
\author{Judith Brinkschulte}
\address[J. Brinkschulte]{Universit\"at Leipzig, Mathematisches Institut, PF 100920, D-04009 Leipzig, Germany}
\email{brinkschulte@math.uni-leipzig.de}
\thanks{MA is partially supported by JSPS KAKENHI Grant Numbers JP18K13422, JP19KK0347, JP21H00980 and JP21K18579. This work was completed during his stay at Universit\"at zu K\"oln, to which he is grateful for the hospitality. SB and JB are partially supported by ANR-DFG project QuaSiDy - Quantization, Singularities and Holomorphic Dynamics. SB is also partially supported by FR 2037 CNRS}
\newtheorem{Theorem}{Theorem}[section]
\newtheorem*{ResidueTheorem}{Residue Formula}
\newtheorem{Claim}[Theorem]{Claim}
\newtheorem{Proposition}[Theorem]{Proposition}
\newtheorem{Definition}[Theorem]{Definition}
\newtheorem{Lemma}[Theorem]{Lemma}
\theoremstyle{remark}
\newtheorem{Remark}[Theorem]{Remark}
\newcommand\C{\mathbb{C}}  
\newcommand\R{\mathbb{R}}
\newcommand\Z{\mathbb{Z}}
\newcommand\D{\mathbb{D}}
\newcommand\Ker{\mathrm{Ker}}
\renewcommand\Re{\operatorname{Re}}
\renewcommand\Im{\operatorname{Im}}
\newcommand\Sing{\operatorname{Sing}}
\newcommand{\pa}{\partial}
\newcommand{\opa}{\overline\pa}
\newcommand{\ol}{\overline }
\begin{document}

\begin{abstract}
We discuss residue formulae that localize the first Chern class of a line bundle to the singular locus of a given holomorphic connection. As an application, we explain a proof for Brunella's conjecture about exceptional minimal sets of codimension one holomorphic foliations with ample normal bundle and for a nonexistence theorem of Levi flat hypersurfaces with transversely affine Levi foliation in compact K\"ahler surfaces.
\end{abstract}

\maketitle

\section{Introduction}
It is a well-known fact that for a logarithmic or an integrable meromorphic connection of a holomorphic line bundle, one can define its residues (cf. \cites{deligne,Saito} for logarithmic connections). 
Residue formulae computing the first Chern class of the line bundle in terms of the residues were shown for logarithmic connections on compact complex manifolds in \cite{ohtsuki} and for integrable meromorphic connections on projective manifolds in \cite{cousin-pereira}*{Proposition 2.2}. 
Their extensions to arbitrary complex manifolds were provided recently by Pereira \cite{Pereira}*{Propositions 3.3 and 3.4},
to which we refer the reader for the details. 

The purpose of this paper is to discuss applications of residue formulae to the study of stable sets of holomorphic foliations of codimension one. 
In particular, we shall make use of the following residue formula for holomorphic connections which are not necessarily integrable a priori but whose curvature forms holomorphically extend across divisors in complex manifolds: 
\begin{ResidueTheorem}
Let $X$ be a compact K\"ahler manifold of dimension $n \geq 1$, $L \to X$ a holomorphic line bundle, and $D$ a reduced divisor.
We set $X' := X \setminus D$.
Suppose that we are given a holomorphic connection $\nabla$ of $L$ over $X'$ such that its curvature form holomorphically extends over $X$. Then the connection $\nabla$ is integrable and the first Chern class of $L$ decomposes as
\[
c_1(L) = - c(\operatorname{Res}(\nabla)) \in H^{1,1}(X)
\]
where $c(\operatorname{Res}(\nabla))$ denotes the residue divisor of $\nabla$ $($see Section \ref{sect:residue} for its definition$)$.
\end{ResidueTheorem}
The notion of residue and the residue formula in this setting seem to be folklore among experts including Brunella (cf. \cite{Pereira-Pirio}*{Proof of Proposition 7.1}). 
In Section \ref{sect:residue}, we offer their proofs (Lemma \ref{lem:residue-existence} and Theorem \ref{thm:residue}) following the recent survey by Pereira \cite{Pereira}.

In order to apply this residue formula, we have to know when the curvature form of a holomorphic connection of a line bundle extends across the divisor. 
When $\dim X = n \geq 4$ and the divisor has a strongly pseudoconvex neighborhood, the curvature form, being a holomorphic form  of degree $\leq n-2$, always extends due to Ohsawa \cite{ohsawa}*{Corollary 7}. In dimension 3, we cannot rely on Ohsawa's result. To this extent we prove that a $d$-closed holomorphic $(n-1)$-form holomorphically extends across the maximal compact analytic subset of a strongly pseudoconvex  manifold of dimension $n$
 (Theorem \ref{thm:extension}).
 We thus obtain a special case of the result in \cite{vSS}*{Corollary 1.4} by using
 $L^2$ theory for the $\opa$-operator on complete K\"ahler manifolds as in \cite{ohsawa}
  instead
of  local algebraic Hodge theory.\\

We then proceed to discuss applications of these results to holomorphic foliations and Levi flat hypersurfaces.
A smooth real hypersurface $M$ in a complex manifold of dimension $n\geq 2$ is called \textit{Levi flat} if its Levi form vanishes identically or, equivalently, if it admits a smooth foliation by complex hypersurfaces called the \textit{Levi foliation} of $M$. When $M$ is real analytic, the Levi foliation can be extended to a neighborhood of $M$ as a holomorphic foliation of codimension one. 
Even though Levi flat hypersurfaces appeared early in counter-examples of pseudoconvex domains that are not Stein, the interest in them rose in connection with the following fundamental question in foliation theory: \textit{Can a leaf of a holomorphic foliation $\mathcal{F}$ of codimension one in $\mathbb{CP}^n$ accumulate to the singular set $\Sing(\mathcal{F})$?}
If not, there is a (non-empty) compact set, invariant by $\mathcal{F}$, minimal with respect to the inclusion, disjoint from $\text{Sing}(\mathcal{F})$ that is called an \textit{exceptional minimal set}. By Cerveau's dichotomy result \cite{Cerveau93}, this set has to be a real analytic Levi flat hypersurface or has linearizable abelian holonomy. The non-existence of such a set has been proved in $\mathbb{CP}^n$, $n\geq 3$, by Lins Neto in \cite{linsneto}, but the case of $\mathbb{CP}^2$ is still open.\\

\indent From the attempt to prove the conjecture about non-existence of real analytic Levi flat hypersurfaces in $\mathbb{CP}^2$ stems an interest in classifying real analytic Levi flat hypersurfaces in compact surfaces in terms of the geometry of its complement. 
For example, some natural and simple foliations such as  transversely affine holomorphic foliations, whose change of coordinates of the foliated atlas are affine in the normal direction, carry an interesting structure, given by a couple of $d$-closed 1-forms. 
This structure induces an integrable (flat) connection on the normal bundle $N_\mathcal{F}$ to the foliation $\mathcal{F}$. 
In \cite{cousin-pereira}, the authors localize the first Chern class of the normal bundle in terms of the residues of this connection when the manifold is projective. This localization allows in \cite{canales} to deduce a dynamical property: a real analytic Levi flat hypersurface in an algebraic surface whose Levi foliation is transversely affine  has an invariant transverse measure. Such Levi flat hypersurfaces contain either a compact leaf or are defined by a closed one-form.  Using a residue formula that localizes the first Chern class to the singular locus of a logarithmic connection, obtained by \cite{ohtsuki}, the authors in \cite{adachi-biard} prove the nonexistence of  real analytic closed Levi flat hypersurfaces in compact K\"ahler surfaces whose Levi foliation is transversely affine and whose complement is 1-convex. \\

\indent In parallel, answering whether or not every leaf of a holomorphic foliation in a compact complex manifold accumulates to $\Sing(\mathcal{F})$ might be seen as a further generalization of nonexistence theorems of compact Levi flat hypersurfaces in dimension $\geq 3$. In \cite{brunella1}, Brunella generalized the question by conjecturing that every leaf of a holomorphic codimension one foliation whose normal bundle is ample accumulates to $\Sing(\mathcal{F})$ in compact complex manifolds of dimension $\geq 3$. The normal bundle to the  foliation in $\mathbb{CP}^n$ being automatically positive, Brunella's conjecture is a simple generalization of what happens in $\mathbb{CP}^n$. Brunella's conjecture was shown to be true recently in \cite{adachi-brinkschulte}.
In \cite{adachi-brinkschulte}, the authors used  Baum--Bott theory to localize the square of the first Chern class to the singular locus and then the first Atiyah form to localize the first Chern class in order to bypass the lack of residue formula for a holomorphic connection.\\

The residue formula stated above and Theorem \ref{thm:extension} allow us to give a different proof of Brunella's conjecture in \cite{adachi-brinkschulte}. 
Also a residue formula recently given in \cite{Pereira} (see Theorem \ref{thm:residue_integrable}) simplifies the proof of the non-existence of real analytic Levi flat  with transversely affine Levi foliation in \cite{adachi-biard}.
We discuss these applications of residue formulae in Section \ref{sect:applications}.

\subsection*{Acknowledgements}
The authors are grateful to Jorge Vit\'orio Pereira for suggesting the extension argument for foliations over exceptional sets, which was crucially used in the proof of Theorem \ref{thm:affine}, and for pointing out his recent survey \cite{Pereira}, which helps to improve some results in Section \ref{sect:residue}.
We are also grateful to Stefan Nemirovski for his helpful comments, in particular for pointing out the reference \cite{vSS}.

\section{Preliminaries}
In this section, we briefly recall basic notions and results that will be used in the sequel.

\subsection{Holomorphic connections}
Let us recall notions around holomorphic connections of holomorphic vector bundles over complex manifolds. 
We restrict ourselves to the case of line bundles, and refer the reader to \cites{deligne, ohtsuki} for backgrounds and details.

Let $X$ be a complex manifold and $L$ a holomorphic line bundle over $X$. We
denote by $\mathcal{O}_X(L)$ the sheaf of germs of holomorphic sections of $L$
and by $\Omega^1_X$ the sheaf of germs of holomorphic $1$-forms on $X$. 
A \emph{holomorphic connection} $\nabla$ on $L$ is a $\mathbb{C}$-linear sheaf morphism
$\nabla : \mathcal{O}_X(L) \longrightarrow \Omega^1_X\otimes_{\mathcal{O}_X}\mathcal{O}_X(L)$
that satisfies the Leibniz rule $\nabla(f s) = df \otimes s + f \nabla s$ for any holomorphic function
$f$ and holomorphic section $s$ of $L$ defined on a common open set.

In a local holomorphic trivialization of $L$, $\nabla$ is defined by a holomorphic 1-form $\eta$, which is called a \emph{connection form}.
The \emph{curvature} $\Theta$ of $\nabla$ is a holomorphic 2-form such that for every locally defined holomorphic section $s$ of $L$, $\nabla^2 s = \Theta \otimes s$, where $\nabla$ is extended on $L$-valued
holomorphic forms by the Leibniz rule. Locally, $\Theta = d\eta$. 
An \emph{integrable} or \emph{flat} connection $\nabla$ is one whose curvature vanishes identically, i.e.; $\Theta = 0$.

\subsection{Holomorphic foliations} 
Let $X$ be a complex manifold of dimension $n \geq 2$. 
In this paper, we discuss foliations in the following sense: 
\begin{Definition}
	\label{def:foliation}
	We say that a collection of holomorphic 1-forms $\mathcal{F} = \{ \omega_\mu \}$, where $\omega_\mu \in \Omega^1_X(U_\mu)$ and $\mathcal{U} = \{ U_\mu \}$ is an open covering of $X$, define 
	a \emph{codimension one holomorphic foliation} on $X$ if they satisfy the following conditions: for any $\mu$ and $\nu$, 
	\begin{enumerate}
		\item There exists $g_{\mu\nu} \in \mathcal{O}_X^*(U_\mu \cap U_\nu)$ such that $\omega_\mu = g_{\mu\nu} \omega_\nu$ on $U_\mu \cap U_\nu$;
		\item The analytic set $\{ p \in U_\mu \mid \omega_\mu(p) = 0\}$ has codimension $\geq 2$;
		\item The integrability condition is fulfilled: $\omega_\mu \wedge d\omega_\mu = 0$ on $U_\mu$.
	\end{enumerate}
\end{Definition}
\bigskip
The cocycle $\{ g_{\mu\nu} \}$ defines a holomorphic line bundle over $X$ called the \emph{normal bundle} of $\mathcal{F}$ and denoted  by  $N_{\mathcal F}$.
The dual bundle of $N_{\mathcal F}$ is called the \emph{conormal bundle} and denoted by $N^*_{\mathcal F}$. Note that $\{\omega_\mu\}$ defines a global $1$-form with values in $N_{\mathcal F}$.

From the first and second condition, the zero sets of the $\omega_\mu$'s glue together and define an analytic set of codimension $\geq 2$ on $X$. It is called the \emph{singular set} of $\mathcal{F}$ and denoted by $\Sing(\mathcal{F})$. 

Away from the singular set, the integrability condition implies that the kernels of the $\omega_\mu$'s define a holomorphic subbundle $T_\mathcal{F}$ of the holomorphic tangent bundle $T^{1,0}_{X}$ of corank one. 
It is integrable in the sense of Frobenius, and its maximal integral submanifolds are called the \emph{leaves} of $\mathcal{F}$. 
The normal bundle identifies then with the quotient bundle $T^{1,0}_X/ T_\mathcal{F}$.

Note that we can define a foliation from a non-trivial integrable meromorphic 1-form $\omega$ on $X$: by taking an open covering $\{ U_\mu \}$, we can find meromorphic functions $f_\mu$ on $U_\mu$ such that ${\omega}_\mu=f_\mu \omega$ is integrable holomorphic 1-form with a singular set of codimension at least $2$, which defines a foliation.

\subsection{Simple normal crossing divisors and modifications} \label{sect:mod}
 A divisor $D=\sum_\nu D_\nu$ in a complex manifold $X$ of dimension $n$ is a \emph{simple normal crossing divisor} if each irreducible component $D_\nu$ is smooth and intersects transversely. Locally, at a point $p$, there exist complex coordinates $(z_1,\dots,z_n)$ such that  $D=\lbrace{z\in X\mid z_1 z_2\dots z_k=0}\rbrace$ for some $0\leq k\leq n$. In other words, such divisor can be locally seen as a union of coordinates hyperplanes and then has locally a simple structure, that we will use in this paper. For the sake of notations, we identify a reduced divisor with its support. 

A simple normal crossing divisor naturally arises as the exceptional set of a strongly pseudoconvex manifold up to a \emph{proper modification}, a proper holomorphic map biholomorphic outside  nowhere dense closed analytic subsets.
 We recall below two modifications repeatedly used in the paper.

\indent The Remmert reduction allows to see a strongly pseudoconvex manifold as a proper modification of a Stein space.
An open complex manifold $X$ is said to be \emph{strongly pseudoconvex} or \emph{1-convex} when it admits a smooth plurisubharmonic exhaustion function which is strictly plurisubharmonic outside a compact subset. 
From a classical result of Grauert, there is a \emph{maximal} compact analytic set $A \subset X$ that contains all the compact analytic subsets in $X$ of positive dimension.
The \emph{Remmert reduction} of $X$ contracts $A$ to a finite set of points: 
there are a proper holomorphic map $\phi \colon X \to Y$ to a normal Stein space $Y$ and a finite set $B\subset Y$ such that $\phi \colon X\setminus A \rightarrow Y\setminus B$ is a biholomorphism (see for example \cite{Peternell}). 
In normal Stein spaces, we can use extension theorems such as Levi's extension theorem or a version of Bochner--Hartogs Theorem on analytic objects such as connections. 

\emph{Hironaka's desingularization} is a powerful modification to simplify the structure of the singularity. Let $X$ be a complex manifold and $B\subset X$ a compact analytic subset. There exists a
complex manifold $\tilde{X}$ and a proper holomorphic map $\pi\colon \tilde{X}\rightarrow X$ and a simple normal crossing divisor $D$ on $\tilde{X}$ such that $\pi|_{\tilde{X}\setminus D} \colon\tilde{X}\setminus D\rightarrow X\setminus B$ is a biholomorphism. 
Note that $\tilde{X}$ remains projective or K\"ahler if $X$ is projective or K\"ahler respectively. 

Those modifications intervene in different sections, to extend analytic objects such as a holomorphic connection or to get a simple normal crossing divisor.
We often identify objects on $X \setminus A$ with $Y \setminus B$ or $\tilde{X} \setminus D$ via the modification maps if there is no confusion.

Later we will pull back a foliation by a modification. Let $\phi \colon X\to Y$ be a proper modification between complex manifolds, i.e.; $\phi$ is proper and there exists a nowhere dense closed analytic subset $B$ of $Y$ such that $\phi: X\setminus \phi^{-1}(B)\to Y\setminus B$ is an isomorphism.
Suppose a codimension one holomorphic foliation is defined by a global integrable meromorphic 1-form $\omega$ on $Y$. Then we can define the pull-back foliation by the pull-back form $\phi^*\omega$, which is again an integrable meromorphic 1-form on $X$.
Since the integrability of $\phi^*\omega$ is clear, we check that $\phi^*\omega$ is meromorphic. Assuming, for convenience, that $X$ and $Y$ are surfaces and let $\omega_1$ and $\omega_2$ be global sections of $\Omega^1_Y$ such that $\omega_1\wedge\omega_2\not\equiv 0$. On a small open set $U\subset Y$, a meromorphic 1-form can be written $f_1 \omega_1 + f_2 \omega_2$ for a meromorphic function $f_1$ and $f_2$ on $U$. Hence,
$\phi^*(f_1 \omega_1 + f_2 \omega_2)= (\phi^*f_1) \phi^*\omega_1 + (\phi^*f_2) \phi^*\omega_2$. It is therefore enough to verify that $\phi^*f$ is meromorphic when $f$ is a meromorphic function. Indeed,
on a small $U$, let $g,h\in\mathcal{O}_Y(U)$ such that $f={g}/{h}$. Then $\phi^*f={g\circ \phi}/{h\circ \phi}$ is a meromorphic function on $\phi^{-1}(U)\subset X$ where $h\circ \phi$ cannot identically vanish on any open subset $V$ of $\phi^{-1}(U)$. Otherwise, $h$ would vanish on $\phi(V\setminus  \phi^{-1}(B))\subset U\setminus B$, that is impossible by definition of $\phi$.

\section{Residue formulae}
\label{sect:residue}
In this section, we first recall a residue formula for integrable meromorphic connections on line bundles based on recent survey by Pereira \cite{Pereira}. 
We will state it in a slightly more general setting, namely, for holomorphic connections defined away from reduced divisors, not assuming their singularities are polar. 
Then, following Brunella, we observe that we can replace the flatness assumption of the given connection with holomorphic extendability of its curvature form when the ambient manifold is compact K\"ahler.

Let $X$ be a complex manifold of dimension $n$, $L$ a holomorphic line bundle over $X$, and $D$ a reduced divisor with irreducible decomposition $D = \sum_{\nu = 1}^N D_\nu$.
We write $X' := X \setminus D$ and let $\nabla$ be a holomorphic connection of $L$ over $X'$.
The unit disk in $\C$ is denoted by $\D$.

We first assume that $\nabla$ is integrable. Then, for each irreducible component $D_\nu$, there is a well-defined number $\operatorname{Res}_{D_\nu}(\nabla) \in \C$, 
which we call the \emph{residue} of $\nabla$ along $D_\nu$, that satisfies the following property: 
for any smooth point $p \in D_\nu$ of $D$, there exists a neighborhood $U$ of $p$ in $X$
such that for any holomorphic embedding $\iota \colon \overline{\D} \to U$ intersecting with $D_\nu$ at $p$ transversely, we have
\[
\operatorname{Res}_{D_\nu}(\nabla) = \frac{1}{2\pi i}\int_{\pa\D} \iota^*\eta,
\]
where $\eta$ is the connection 1-form of $\nabla$ with respect to arbitrary trivialization of $L$ over $U$.
We define the \emph{residue divisor} of $\nabla$ by
\[
\operatorname{Res}(\nabla) := \sum_{\nu = 1}^N \operatorname{Res}_{D_\nu}(\nabla) D_\nu
\]
as a divisor with $\C$-coefficients. 
Each divisor $D$ with $\C$-coefficients naturally defines a singular cohomology class in $H^2(X, \C)$. 
We call it the \emph{Chern class} of $D$ denoted by $c(D)$.

By adapting the proofs for \cite{Pereira}*{Theorems 3.1 and 3.3} (cf. Weil \cite{Weil}), we have the following residue formula.
\begin{Theorem}[cf. Pereira \cite{Pereira}*{Theorem 3.3}]
\label{thm:residue_integrable}
Let $X$ be complex manifold, which is not necessarily compact, $L \to X$ a holomorphic line bundle, and $D$ a reduced divisor.
We set $X' := X \setminus D$.
Suppose that we are given an integrable holomorphic connection $\nabla$ of $L$ over $X'$. Then the first Chern class of $L$ decomposes as
\[
c_1(L) = - c(\operatorname{Res}(\nabla)) \in H^{2}(X, \C).
\]
\end{Theorem}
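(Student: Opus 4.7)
The plan is to reduce to a simple normal crossing divisor via Hironaka's desingularization, put the connection in a local logarithmic normal form, and then localize the first Chern class using a smoothing argument together with the Poincar\'e--Lelong formula. First I would apply the desingularization $\pi \colon \tilde X \to X$ described in Section~\ref{sect:mod} to arrange that $\tilde D := \pi^{-1}(D)$ is SNC. The pullback $\pi^* \nabla$ is an integrable holomorphic connection on $\pi^*L$ over $\tilde X \setminus \tilde D$, and using the projection formula for a birational modification of smooth complex manifolds ($\pi_* \pi^* = \mathrm{id}$ on $H^2(-,\C)$) together with a local comparison of residues on strict transforms versus exceptional components (whose classes vanish under $\pi_*$), the theorem reduces to the SNC case on $\tilde X$.

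On a polydisk $(\D^n, z_1, \dots, z_n)$ with $\tilde D = \{z_1 \cdots z_k = 0\}$ and a local holomorphic trivialization of $\pi^* L$, the connection form $\eta$ is a $d$-closed holomorphic $1$-form on $(\D^*)^k \times \D^{n-k}$. Since $H^1_{\mathrm{dR}}((\D^*)^k \times \D^{n-k}, \C) = \C^k$ is generated by $\{dz_j/z_j\}_{j=1}^k$, one obtains the decomposition
\[
\eta = \sum_{j=1}^k r_j \frac{dz_j}{z_j} + df,
\]
where $f$ is holomorphic on $(\D^*)^k \times \D^{n-k}$ and $r_j = \operatorname{Res}_{\tilde D_j}(\pi^* \nabla)$, by evaluating the defining integral on a transversal disk. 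Using a partition of unity I would then construct a smooth connection $\nabla^\infty$ on $\pi^* L$ that coincides with $\pi^*\nabla$ outside a tubular neighborhood $V$ of $\tilde D$; its curvature $F^\infty$ is a smooth closed $2$-form representing $c_1(\pi^* L)$. Letting $V$ shrink to $\tilde D$, absorbing the $df$ term into the gauge change $e^{-f}$ on the modified region (valid off $\tilde D$), and invoking the Poincar\'e--Lelong identity $d(dz_j/z_j) = 2\pi i [\tilde D_j]$ yields
\[
\lim_{V \downarrow \tilde D} [F^\infty] = -2\pi i \sum_\nu r_\nu [\tilde D_\nu]
\]
as currents, hence $c_1(\pi^* L) = -c(\operatorname{Res}(\pi^* \nabla))$; pushing down via $\pi$ gives the theorem on $X$.

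The hard part is the possible essential singularity of $f$ along $\tilde D$: naive current extensions of $\eta$ need not converge, so the wild exact piece could, a priori, contaminate the limit. The delicate point in the smoothing step is to arrange the partition of unity so that the $df$ pieces are absorbed by the local gauge changes $e^{-f}$ exactly on the region where $\nabla^\infty$ has been modified, leaving only the logarithmic skeleton to contribute to the limiting curvature current. The gauges $e^{-f}$ do not globalize on $\tilde X$; controlling their \v{C}ech discrepancy across chart overlaps—so that it is cohomologically invisible—is the heart of the argument, and it parallels the classical Weil--Deligne proof of the residue formula for logarithmic connections.
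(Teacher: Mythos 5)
Your overall strategy (SNC reduction, local normal form $\eta=\sum_j r_j\,dz_j/z_j+df$, smoothing, Poincar\'e--Lelong) is a legitimate alternative to the paper's argument, but as written it has a genuine gap exactly at the point you yourself flag as ``the heart of the argument'': the exact piece $df$ is never actually disposed of. Since $\nabla$ is only holomorphic on $X'$ --- not meromorphic along $D$ --- the primitive $f$ may have an essential singularity along $\tilde D$ (think $f=e^{1/z_1}$). Then $\eta$ and $df$ need not be locally integrable near $\tilde D$, so the current-theoretic limit $\lim_{V\downarrow\tilde D}[F^\infty]$ is not defined by any statement you prove; the terms $d\chi_V\wedge df$ produced by the cutoff are not shown to be cohomologically negligible; and the gauge changes $e^{-f_\alpha}$ produce a line bundle with logarithmic connection only on $\tilde X\setminus\tilde D$, whose extension across $\tilde D$ is isomorphic to $\pi^*L$ only up to a twist by $\mathcal{O}(\sum_\nu n_\nu\tilde D_\nu)$, which simultaneously shifts $c_1$ and the residues by integers. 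All of these points are resolvable (the \v{C}ech discrepancies $f_\alpha-f_\beta$ are in fact tame because $\eta_\alpha-\eta_\beta=d\log g_{\alpha\beta}$ with $g_{\alpha\beta}$ holomorphic across $D$), but none of them is resolved in the proposal; asserting that the difficulty ``parallels the classical Weil--Deligne proof'' is not a proof, because the classical setting assumes at worst logarithmic poles and the whole issue here is that no pole order is assumed.

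The paper's route avoids every one of these difficulties and is worth comparing. It never resolves $D$ and never forms currents: it pairs $c(\operatorname{Res})$ and $c_1(L)$ against a generic singular $2$-cycle whose simplices meet $D$ transversely at smooth points, so only the one-variable Laurent expansion at a smooth point of $D$ is needed; there the exact part of $\eta_-$ has an explicit single-valued holomorphic primitive on the punctured disk (equation \eqref{eq:decomposition2}), so its boundary integral vanishes by Stokes with no convergence issue, and the trivial-bundle case (Lemma \ref{lem:weil}) follows. The passage from the trivial bundle to general $L$ is then done not by Chern--Weil smoothing but by the total-space trick of Proposition \ref{prop:residue_with_curvature}: the $1$-form $\omega=d\zeta/\zeta+\eta$ on $L\setminus\pi^{-1}(D)$ has residue divisor $X+\pi^*(\operatorname{Res}(\nabla))$, and restricting Lemma \ref{lem:weil} on the total space to the zero section yields the formula. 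If you want to salvage your approach, the concrete missing steps are: (i) a proof that $\int_{U}d\chi_V\wedge df\wedge g\to 0$ for test forms $g$ despite the essential singularity of $f$ (or an argument replacing currents by pairing with cycles), and (ii) bookkeeping of the integer twist relating $\pi^*L$ to the logarithmic extension obtained after the gauge changes $e^{-f_\alpha}$.
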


Notice that the meromorphicity of the connection was not essentially used in the proof of \cite{Pereira}*{Theorem 3.3}.

Next we discuss a residue formula for holomorphic connections with possibly non-vanishing curvature. 
To assure the well-definedness of residues, we need some control on the singularity of our connection $\nabla$ along the divisor $D$.
We assume that the curvature form of $\nabla$ extends holomorphically across $D$. 

\begin{Lemma}
\label{lem:residue-existence}
Let $\nabla$ be a holomorphic connection of $L$ over $X'$, which is not necessarily integrable. 
Assume that the curvature 2-form $\Omega$ of $\nabla$ extends holomorphically over $X$.
Then the residue $\operatorname{Res}_{D_\nu}(\nabla) \in \C$ is well-defined. 
\end{Lemma}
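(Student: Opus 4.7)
The plan is to argue locally around a smooth point $p \in D_\nu$, trivialize $L$, and reduce the defining integral for the residue to an integral of a \emph{closed} holomorphic $1$-form on $U \setminus D$; the value then follows from the first homology of a punctured polydisk together with Cauchy's theorem. The curvature-extension hypothesis enters exactly in producing this closed form.

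Concretely, I would choose a polydisk $U$ around $p$ with coordinates $(z_1, z') \in \D \times \D^{n-1}$ for which $D \cap U = \{z_1 = 0\}$, and a local frame of $L$ so that $\nabla = d + \eta$ for some holomorphic $1$-form $\eta$ on $U \setminus D$. By hypothesis $\Omega := d\eta$ extends holomorphically to all of $U$, and since $d\Omega = 0$ holds on the dense open subset $U \setminus D$ while $d\Omega$ is holomorphic on $U$, we get $d\Omega = 0$ on $U$. The holomorphic Poincar\'e lemma on the polydisk $U$ then yields a holomorphic $1$-form $\eta_0$ on $U$ with $d\eta_0 = \Omega$, and $\tilde\eta := \eta - \eta_0$ is a closed holomorphic $1$-form on $U \setminus D$.

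Writing $\eta = \tilde\eta + \eta_0$, I would check the two expected invariances of $\frac{1}{2\pi i}\int_{\pa\D}\iota^*\eta$. The $\eta_0$-piece vanishes by Cauchy's theorem since $\iota^*\eta_0$ is a holomorphic $1$-form on $\overline\D$. The $\tilde\eta$-piece depends only on the homology class of $\iota(\pa\D)$ in $U \setminus D \simeq \D^* \times \D^{n-1}$, which deformation retracts to $S^1$; any positively-oriented transverse disk has boundary generating $H_1(U \setminus D; \Z)$, so this value is independent of the choice of $\iota$. Independence from the local frame is the standard logarithmic calculation: a change of frame by $g \in \mathcal{O}_X^*(U)$ modifies $\eta$ by $dg/g$, and $\frac{1}{2\pi i}\int_{\pa\D}\iota^*(dg/g)$ is the winding number of $g \circ \iota$ about $0$, which vanishes because $g \circ \iota$ is a non-vanishing holomorphic function on $\overline\D$.

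Finally, to promote this locally defined quantity to a single complex number $\operatorname{Res}_{D_\nu}(\nabla)$, I would note that the same argument (moving $p$ within $U$) shows the value is locally constant on the smooth locus $D_\nu^{\mathrm{sm}}$, which is connected by irreducibility of $D_\nu$, hence constant. I do not foresee a genuine obstacle: once the holomorphic Poincar\'e lemma is invoked to write $\Omega = d\eta_0$, every subsequent step is routine homology together with Cauchy's theorem. The one conceptual point is recognizing that holomorphic extendability of the curvature is precisely the condition that enables this reduction to the closed (i.e.\ integrable) case treated in Theorem~\ref{thm:residue_integrable}.
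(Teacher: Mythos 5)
Your proof is correct, and it takes a genuinely different (more structural) route than the paper's. The paper works with the Laurent expansion of the connection form $\eta$ in the coordinate $z_1$ transverse to $D_\nu$, splits $\eta = \eta_+ + \eta_-$ into nonnegative and negative parts, uses the extendability of $\Omega = d\eta$ (via uniqueness of Laurent coefficients) to force $d\eta_- = 0$, and then computes explicitly that $\eta_- = f_{-1}\,dz_1/z_1 + d(\cdots)$ with $f_{-1}$ constant, so that the residue is the coefficient $f_{-1}$. You instead subtract a holomorphic primitive $\eta_0$ of $\Omega$ supplied by the holomorphic Poincar\'e lemma on the polydisk (after noting $d\Omega=0$ by density), reducing at once to a closed form $\tilde\eta$ on $U \setminus D$, and then invoke homological invariance of periods over $H_1(\D^* \times \D^{n-1}) \cong \Z$ together with the fact that the boundary of a positively oriented transverse disk generates this group. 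The two decompositions have the same shape --- a piece holomorphic on all of $U$, killed by Cauchy's theorem, plus a piece closed on $U \setminus D$ --- and the curvature hypothesis enters at the analogous point in each; your version avoids the series manipulation and makes transparent that the lemma is a reduction to the integrable case of Theorem \ref{thm:residue_integrable}. What the paper's explicit computation buys is the normal form \eqref{eq:decomposition1}--\eqref{eq:decomposition2}, which is reused verbatim in the proof of Lemma \ref{lem:weil} to evaluate the limits of integrals over shrinking $2$-simplices; with your approach that step would need a short additional remark (e.g.\ that the period of $\tilde\eta$ over the shrinking boundary is constant while the $\eta_0$-contribution tends to zero). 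The frame-independence via $dg/g$ and the global constancy along the connected smooth locus of $D_\nu$ are handled the same way in both arguments.
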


Although this fact seems to be well-known to experts (cf. \cite{Pereira-Pirio}*{Proof of Proposition 7.1}), we give a proof for the readers' convenience. 

\begin{proof}
Take a smooth point $p \in D_\nu$ of $D$
and a holomorphic chart $(U; z_1, z_2, \dots, z_n)$ trivializing $L$ such that $U \simeq \D^n$ and $U \cap D_\nu \simeq \{0\} \times \D^{n-1}$.
We will use the notation $z' = (z_2, \cdots, z_n)$.
Fix a local trivialization of $L$ over $U$ and let $\eta$ be the connection 1-form of $\nabla$ with respect to this trivialization, which is a holomorphic 1-form on $U \setminus D_\nu$.

We expand $\eta$ into a Laurent series in $z_1$ on $(\D \setminus \{0\}) \times \D^{n-1}$: 
\[
\eta  = \sum_{k \in \Z} z_1^k \eta_k, \quad \eta_k = f_k(z') dz_1 + \eta_k', \quad \eta_k' = \sum_{j=2}^n g_{k,j}(z') dz_j
\]
where $f_k, g_{k,j} \in \mathcal{O}(\D^{n-1})$. We decompose $\eta$ into 
\begin{equation} \label{eq:decomposition1}
\eta = \eta_+ + \eta_-, \quad \eta_{+}  = \sum_{k \geq 0} z_1^k \eta_k, \quad \eta_{-}  = \sum_{k < 0} z_1^k \eta_k.
\end{equation}
Since $\eta_{+}$ extends as a holomorphic 1-form to $U$, we get
$\int_{\pa\D} \iota^*\eta_{+} = 0$ from Cauchy's integration theorem. 
We now compute $\int_{\pa\D} \iota^*\eta_{-}$.

The assumption that the curvature 2-form $\Omega = d\eta$ extends holomorphically across $D$
implies that $d\eta_- = 0$, that is, 
\begin{align*}
0 = d \eta_{-}  
&= \sum_{k < 0} (k z_1^{k-1} dz_1 \wedge \eta_k + z_1^{k} d\eta_k)\\
&=  \sum_{k < 0} z_1^k ((k+1) dz_1 \wedge \eta_{k+1} + d\eta_k).
\end{align*}
Hence, for $k < 0$,
\[
0 = (k+1) dz_1 \wedge \eta_{k+1} + d\eta_k
= dz_1 \wedge \left((k+1) \eta'_{k+1} - df_k\right) + d\eta_k'
\]
and, therefore, $df_k = (k+1) \eta'_{k+1}$.
In particular, when $k = -1$, we obtain $df_{-1} = 0$ and we see that $f_{-1}$ is a constant function on $\D^{n-1}$. 
It follows that
\begin{equation} \label{eq:decomposition2}
\begin{split}
 \eta_{-}  
& = \sum_{k < 0} z_1^k (f_k dz_1 + \eta_k')\\
& = f_{-1} \frac{dz_1}{z_1} + \sum_{k < 0} z_1^k (\frac{f_{k-1}}{z_1} dz_1 + \frac{df_{k-1}}{k})\\
& = f_{-1} \frac{dz_1}{z_1} + d \left(\sum_{k < 0} \frac{z_1^k f_{k-1}}{k}\right).
\end{split}
\end{equation}
We therefore have
\[
\int_{\pa\D} \iota^*\eta =\int_{\pa\D} \iota^*\eta_- = f_{-1} \int_{\pa\D}\frac{dz_1}{z_1} = 2\pi i f_{-1}.
\]

Since $\int_{\pa\D} \iota^*\eta$ does not depend on the choice of the holomorphic coordinate on $U$ 
nor on the trivialization of $L$ over $U$, $f_{-1}$ induces a well-defined locally constant function on the smooth points of $D$ on $D_\nu$. 
Hence,  $\operatorname{Res}_{D_\nu}(\nabla) := f_{-1}$ is well-defined.
\end{proof}

By adding a curvature term to the formula in Theorem \ref{thm:residue_integrable}, a residue formula holds for a holomorphic connection whose curvature form extends over entire $X$.

\begin{Proposition}
\label{prop:residue_with_curvature}
Let $X$ be a complex manifold, $\pi \colon L \to X$ a holomorphic line bundle, and $D$ a reduced divisor.
We set $X' := X \setminus D$.
Suppose that we are given a holomorphic connection $\nabla$ of $L$ over $X'$ whose curvature form $\Omega$ extends holomorphically on $X$. Then the first Chern class of $L$ decomposes as
\[
c_1(L) = - c(\operatorname{Res}(\nabla)) + \frac{i}{2\pi}[\Omega] \in H^{2}(X, \C).
\]
\end{Proposition}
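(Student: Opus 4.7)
The plan is to add the curvature contribution to the argument of Theorem \ref{thm:residue_integrable}. Fix a smooth Hermitian metric $h$ on $L$ with Chern connection $\nabla^h$ and curvature $\Theta^h$, so that $c_1(L)=\tfrac{i}{2\pi}[\Theta^h]$. On $X'$ the globally defined $(1,0)$-form $\alpha:=\nabla-\nabla^h$ now satisfies $d\alpha=\Omega-\Theta^h$ (rather than $-\Theta^h$ as in the integrable case), since adding a $(1,0)$-form to the Chern connection changes the total curvature by its exterior derivative.

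The argument then proceeds as in the proof of Theorem \ref{thm:residue_integrable}. For a compactly supported closed smooth $(2n-2)$-form $\phi$ on $X$, with $n=\dim X$, apply Stokes on $X_\epsilon:=X\setminus N_\epsilon(D)$ for a tubular neighborhood $N_\epsilon(D)$ of $D$ to obtain
\[
\int_{X_\epsilon}(\Omega-\Theta^h)\wedge\phi = \int_{X_\epsilon} d(\alpha\wedge\phi) = -\int_{\partial N_\epsilon(D)}\alpha\wedge\phi,
\]
using $d\alpha=\Omega-\Theta^h$ and $d\phi=0$. As $\epsilon\to 0$, the left side converges to $\int_X(\Omega-\Theta^h)\wedge\phi$. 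For the right side, the Laurent decomposition in the proof of Lemma \ref{lem:residue-existence} writes, near a smooth point of $D_\nu$, $\alpha=\alpha_{\mathrm{reg}}+\operatorname{Res}_{D_\nu}(\nabla)\,\tfrac{dz_1}{z_1}+dF$ with $\alpha_{\mathrm{reg}}$ smooth and $F$ meromorphic: the smooth part vanishes in the limit; the logarithmic part contributes $2\pi i\,\operatorname{Res}_{D_\nu}(\nabla)\int_{D_\nu}\phi|_{D_\nu}$ by the standard residue integral; and the exact meromorphic part $dF\wedge\phi=d(F\phi)$ (using $d\phi=0$) integrates to zero on the closed $(2n-1)$-manifold $\partial N_\epsilon(D)$ by Stokes. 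Summing over components gives
\[
[\Omega-\Theta^h]+2\pi i\sum_\nu\operatorname{Res}_{D_\nu}(\nabla)[T_{D_\nu}]=0 \in H^2(X,\mathbb{C}),
\]
where $T_{D_\nu}$ is the integration current along $D_\nu$; substituting $c_1(L)=\tfrac{i}{2\pi}[\Theta^h]$ and $[T_{D_\nu}]=c(D_\nu)$ (Poincar\'e--Lelong) then rearranges to the claim.

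The main obstacle is the global patching of the local Laurent decompositions across the singular locus of $D$. One resolves this by first applying Hironaka's desingularization, as used elsewhere in the paper, to reduce to the simple normal crossing case, and then using a partition of unity subordinate to the stratification of $D$ to localize the residue contributions to each irreducible component near crossings. The crucial point that makes the Stokes argument succeed in the presence of higher-order poles of $\alpha$ is that $\phi$ being closed forces $dF\wedge\phi$ to be exact on the closed boundary $\partial N_\epsilon(D)$, so only the logarithmic residues survive in the boundary limit.
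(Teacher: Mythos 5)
Your approach is genuinely different from the paper's: you run a Chern--Weil/Poincar\'e--Lelong argument, comparing $\nabla$ with the Chern connection of a smooth metric and letting a tubular neighborhood of $D$ shrink, whereas the paper (following Pereira and Weil) works on the total space of $L$ with the form $d\zeta/\zeta+\eta$ and pairs against singular $2$-cycles chosen to meet $D$ transversely at finitely many smooth points. The paper's choice is not cosmetic: by localizing everything to finitely many points of $D$, it never needs a globally defined primitive for the higher-order polar part of the connection form, which is exactly where your argument has a gap.

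The gap is in the step where you discard the term $dF$. The decomposition $\eta_-=f_{-1}\,dz_1/z_1+dF$ from Lemma \ref{lem:residue-existence} is purely local (it depends on the chart and the product structure $U\simeq\D\times\D^{n-1}$), so $F$ is not a function on $\partial N_\epsilon(D)$ and $\int_{\partial N_\epsilon(D)}d(F\phi)=0$ does not make sense as written. Your proposed fix --- a partition of unity $\{\rho_i\}$ --- destroys the exactness you need: $\sum_i\rho_i\,dF_i=d\bigl(\sum_i\rho_i F_i\bigr)-\sum_i F_i\,d\rho_i$, and the cross term involves $F_i$ itself, which blows up along $D$. Worse, the paper explicitly does not assume the singularities are polar, so the Laurent series $F=\sum_{k<0}z_1^kf_{k-1}/k$ may have an essential singularity along $D$ (your description of $F$ as meromorphic is not justified), and no crude estimate of $F$ against the $O(\epsilon)$ volume of $\partial N_\epsilon(D)$ will close the argument. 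The cross term can in fact be salvaged --- one can check that on overlaps $F_i-F_j$ extends holomorphically across $D$, so $\sum_iF_i\,d\rho_i$ is bounded near the smooth locus --- but this observation is essential and absent from your write-up. Two further points are glossed over: $\partial N_\epsilon(D)$ is not a smooth closed hypersurface near the crossings of $D$ (even after resolving to normal crossings), so the Stokes argument there needs an excision-and-error-control step; and invoking Hironaka replaces $X$ by $\tilde X$, so you must additionally descend the identity from $H^2(\tilde X,\C)$ to $H^2(X,\C)$, matching residues along strict transforms and accounting for exceptional components. None of these is fatal, but as it stands the key cancellation your proof hinges on is asserted rather than proved.
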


Although this essentially follows from the proofs of \cite{Pereira}*{Theorems 3.1 and 3.3}, we reproduce the proofs here for the readers' convenience. We first consider the case when $L$ is the trivial line bundle.

\begin{Lemma}
\label{lem:weil}
Let $X$ be a complex manifold and $D$ a reduced divisor with irreducible decomposition $D = \sum_\nu D_\nu$.
Let $\eta$ be a holomorphic 1-form defined on $X \setminus D$, regarded as a holomorphic connection of the trivial line bundle, such that $d\eta$ extends holomorphically over $X$. Then
\[
c(\operatorname{Res}(\eta)) = \frac{i}{2\pi}[d\eta] \in H^2(X,\C).
\]
\end{Lemma}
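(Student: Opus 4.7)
The plan is to establish the identity by a direct \v{C}ech--de Rham cocycle computation, after reducing to the simple normal crossing case. First, I would reduce to $D$ being SNC via Hironaka's desingularization (Section~\ref{sect:mod}): pull $\eta$ back along a resolution $\pi\colon\tilde X\to X$, so the curvature-extension hypothesis is preserved for $\pi^*\eta$. Proving the lemma on $\tilde X$ then implies the lemma on $X$ by applying $\pi_*$, using that $\pi_*$ is inverse to $\pi^*$ on $H^2$ (since $\pi$ is a proper modification of smooth manifolds), that exceptional components of $\pi^{-1}(D)$ push to zero as divisors, and that residues along proper transforms coincide with residues along the original components.

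Next, on polydiscs $U_\alpha$ adapted to $D$ (with $D\cap U_\alpha=\{z_1\cdots z_{k_\alpha}=0\}$), I would iterate the Laurent-expansion argument of Lemma~\ref{lem:residue-existence} variable by variable to produce a decomposition
\[
\eta|_{U_\alpha\setminus D}\;=\;\sum_{\nu}r_\nu\,d\log s_\nu^\alpha+d\phi_\alpha+\psi_\alpha,
\]
where $s_\nu^\alpha$ is a local holomorphic section of $\mathcal{O}(D_\nu)$ vanishing on $D_\nu\cap U_\alpha$ (so only finitely many $\nu$ contribute), $r_\nu=\operatorname{Res}_{D_\nu}(\eta)$, $\phi_\alpha$ is meromorphic with polar set in $D\cap U_\alpha$, and $\psi_\alpha$ is a holomorphic $1$-form on $U_\alpha$. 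In particular, $d\eta|_{U_\alpha}=d\psi_\alpha$.

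The final step is the \v{C}ech calculation. On overlaps $U_\alpha\cap U_\beta$, writing $g_\nu^{\alpha\beta}=s_\nu^\alpha/s_\nu^\beta$ for the transition cocycle of $\mathcal{O}(D_\nu)$, the identity
\[
\psi_\alpha-\psi_\beta\;=\;-\sum_\nu r_\nu\,d\log g_\nu^{\alpha\beta}-d(\phi_\alpha-\phi_\beta)
\]
yields a holomorphic primitive $\lambda_{\alpha\beta}=-\sum_\nu r_\nu\log g_\nu^{\alpha\beta}-(\phi_\alpha-\phi_\beta)$ of $\psi_\alpha-\psi_\beta$. On triple overlaps the $(\phi_\alpha-\phi_\beta)$ pieces telescope to $0$, while $\log g_\nu^{\alpha\beta}+\log g_\nu^{\beta\gamma}+\log g_\nu^{\gamma\alpha}=2\pi i\,c^\nu_{\alpha\beta\gamma}$, where $\{c^\nu_{\alpha\beta\gamma}\}$ is the standard \v{C}ech $2$-cocycle representing $c_1(\mathcal{O}(D_\nu))$. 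The \v{C}ech--de Rham representative of $[d\eta]$ thus computes to $-2\pi i\sum_\nu r_\nu c^\nu_{\alpha\beta\gamma}$, giving $[d\eta]=-2\pi i\,c(\operatorname{Res}(\eta))$, i.e.\ $c(\operatorname{Res}(\eta))=(i/2\pi)[d\eta]$.

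The hard part will be the iterative local decomposition at normal-crossing points: one has to apply the single-variable Laurent-expansion argument of Lemma~\ref{lem:residue-existence} successively in each of the $k_\alpha$ variables, tracking how the exact piece $d\phi_\alpha$ and the holomorphic remainder $\psi_\alpha$ evolve at each stage and checking at each step that the Laurent coefficients retain the singularity structure needed for the next iteration. Once this decomposition is in hand, the rest of the argument proceeds cleanly: the potentially troublesome meromorphic correction $d\phi_\alpha$ is eliminated by the telescoping on triple overlaps, which is what makes the \v{C}ech comparison work.
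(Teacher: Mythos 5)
Your route is genuinely different from the paper's. The paper (following Pereira and Weil) never leaves the smooth locus of $D$: it pairs $c(\operatorname{Res}(\eta))$ with an arbitrary singular $2$-cycle chosen transverse to $D$ at smooth points, shrinks the simplices meeting $D$, and applies Stokes' theorem together with the local decomposition \eqref{eq:decomposition1}--\eqref{eq:decomposition2} from Lemma~\ref{lem:residue-existence}. That avoids both resolution of singularities and any normal-crossing local analysis, and it works directly on an arbitrary (possibly non-compact) $X$. Your \v{C}ech--de Rham scheme is workable and has the virtue of producing an explicit cocycle-level identity, but it is heavier: you need embedded resolution of $D$ in a not necessarily compact manifold, a proper pushforward $\pi_*$ on $H^2$ with $\pi_*\pi^*=\mathrm{id}$, the vanishing of pushforwards of exceptional classes, and the multi-variable version of the Laurent decomposition at $k$-fold crossing points. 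The latter does go through: splitting $\eta$ into the part $\eta_{\mathrm{hol}}$ with all Laurent exponents nonnegative and the rest $\eta_{\mathrm{sing}}$, the hypothesis forces $d\eta_{\mathrm{sing}}=0$, and since $H^1_{dR}((\D^*)^k\times\D^{n-k})$ is spanned by the classes of $dz_\nu/z_\nu$ one gets $\eta_{\mathrm{sing}}=\sum_\nu r_\nu\,dz_\nu/z_\nu+d\phi$.

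One claim in your write-up is wrong and hides the only delicate point of your argument: $\phi_\alpha$ is \emph{not} meromorphic in general. The lemma deliberately does not assume $\eta$ has polar singularities along $D$ (see the opening of Section~\ref{sect:residue}), and already in the smooth-point computation of Lemma~\ref{lem:residue-existence} the primitive $\sum_{k<0}z_1^kf_{k-1}/k$ is a full Laurent series, so $\phi_\alpha$ is merely holomorphic on $U_\alpha\setminus D$ with essential singularities along $D$ allowed. Consequently it is not automatic that $\lambda_{\alpha\beta}=-\sum_\nu r_\nu\log g_\nu^{\alpha\beta}-(\phi_\alpha-\phi_\beta)$ is holomorphic on all of $U_\alpha\cap U_\beta$, which is exactly what the \v{C}ech--de Rham descent requires; note that even if the $\phi$'s were meromorphic, a difference of two functions with poles on $D$ need not be regular on $D$. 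The gap is fixable: from your overlap identity, $d(\phi_\alpha-\phi_\beta)=-(\psi_\alpha-\psi_\beta)-\sum_\nu r_\nu\,d\log g_\nu^{\alpha\beta}$ extends holomorphically across $D$, and a function holomorphic on $(U_\alpha\cap U_\beta)\setminus D$ whose differential extends holomorphically across $D$ itself extends holomorphically (subtract a local primitive of the extended form and use that the complement of $D$ is locally connected at each of its points together with Riemann extension). You should insert this step explicitly; with it, the telescoping on triple overlaps and the identification of $\frac{1}{2\pi i}(\log g_\nu^{\alpha\beta}+\log g_\nu^{\beta\gamma}+\log g_\nu^{\gamma\alpha})$ with a cocycle for $c_1(\mathcal{O}(D_\nu))$ give the stated formula.
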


\begin{proof}(following the proof of \cite{Pereira}*{Theorem 3.1})
Let $\sigma \in H_2(X, \Z)$ be an arbitrary homology class. 
We compute the pairing $c(\operatorname{Res}(\eta)) \cdot \sigma$.
Denote by $\Delta^2 \subset \R^2$ the standard 2-simplex. 
Represent $\sigma$ by a singular 2-cycle $\sum_j \alpha_j \sigma_j$, $\alpha_j \in \Z$,
such that the image of each 2-simplex $\sigma_j \colon \Delta^2 \to X$ either does not intersect $D$ or intersect $D$ transversely at a unique point that lies on the smooth locus of $D$. 
We denote 
\[
J_0 := \{ j \mid \sigma_j \cap D = \varnothing \}, \quad J_1 := \{ j \mid D \cdot \sigma_j = 1\}.
\]
Furthermore we may choose $\sigma$ so that the images of the boundaries of the 2-simplexes $\sigma_j$ do not intersect $D$. 

For each $\varepsilon > 0$, we take a subdivision $\{ \sigma^\varepsilon_{j,k} \}_{k=0}^{N^\varepsilon_j}$ of $\sigma_j$ for $j \in J_1$ so that the diameter of the unique face $\sigma^\varepsilon_{j,0}$ intersecting $D$ tends to 0 as $\varepsilon \searrow 0$. 
For small enough $\varepsilon$, we can work with a chart $U$ as in the proof of Lemma \ref{lem:residue-existence} and obtain the same decomposition \eqref{eq:decomposition1} and \eqref{eq:decomposition2} for $\eta$. It follows that
\[
\lim_{\varepsilon \searrow 0} \frac{1}{2\pi i}\int_{\partial \Delta^2} (\sigma^{\varepsilon}_{j,0})^*\eta
= \operatorname{Res}_{D_\nu}(\eta)
\]
where $D_\eta$ is the component that intersects with $\sigma_j$.
Therefore we can write
\begin{align*}
c(\operatorname{Res}(\eta)) \cdot \sigma 
&= \sum_{j \in J_1} \alpha_j \left(\sum_\nu \operatorname{Res}_{D_\nu}(\eta) D_\nu \cdot \sigma_j\right) = \lim_{\varepsilon \searrow 0}\frac{1}{2\pi i} \sum_{j \in J_1} \alpha_j \int_{\pa\Delta^2} (\sigma^\varepsilon_{j,0})^*\eta
\end{align*}
Since $\pa \sigma =0$, 
\[
\sum_{j \in J_1} \alpha_j \pa \sigma_j = -\sum_{j \in J_0} \alpha_j \pa \sigma_j
= -\sum_{j \in J_0} \alpha_j \sum_{k = 0}^{N^\varepsilon_j} \pa \sigma_{j,k}^{\varepsilon}
\]
as singular 1-chain. Hence, by Stokes' formula,
\begin{align*}
c(\operatorname{Res}(\eta)) \cdot \sigma 
&= - \lim_{\varepsilon \searrow 0} \frac{1}{2\pi i} \left(\sum_{j \in J_0} \alpha_j \int_{\pa\Delta^2} \sigma_j^*\eta + \sum_{j \in J_0} \alpha_j \sum_{k = 1}^{N^\varepsilon_j} \int_{\pa\Delta^2} (\sigma_{j,k}^{\varepsilon})^*\eta \right)\\
&= - \lim_{\varepsilon \searrow 0} \frac{1}{2\pi i} \left(\sum_{j \in J_0} \alpha_j \int_{\Delta^2} \sigma_j^*(d\eta) + \sum_{j \in J_0} \alpha_j \sum_{k = 1}^{N^\varepsilon_j} \int_{\Delta^2} (\sigma_{j,k}^{\varepsilon})^*(d\eta) \right)\\
&= -\frac{1}{2\pi i} \sum_{j} \alpha_j \int_{\Delta^2} \sigma_j^*d\eta = \frac{i}{2\pi}[d\eta] \cdot \sigma.
\end{align*}
\end{proof}

\begin{proof}[Proof for Proposition \ref{prop:residue_with_curvature}]
(following the proof of \cite{Pereira}*{Theorem 3.3}) On each local trivialization of $L$, 
let $\tilde{\eta} := d\zeta/\zeta + \eta$ where $\zeta$ denotes the local fiber coordinate and $\eta$ denotes the connection form of $L$ with respect to this local trivialization.
This glues together and defines a global holomorphic 1-form $\tilde{\eta}$ defined on $L \setminus (\pi^{-1}(D) \cup X)$ where $X$ is regarded as the zero section of $L$.

By construction, $\operatorname{Res}(\tilde{\eta}) = X + \pi^*(\operatorname{Res}(\nabla))$.
From Lemma \ref{lem:weil}, $c(\operatorname{Res}(\tilde{\eta})) =  i[\pi^*\Omega]/2\pi \in H^2(L, \C)$.
Since $c(X)|_X = c_1(L) \in H^2(X, \C)$, it follows that
\[
\frac{i}{2\pi}[\Omega] = c(\operatorname{Res}(\tilde{\eta}))|_X = c_1(L) + c(\operatorname{Res}(\nabla)) \in H^2(X, \C).
\]
\end{proof}

Brunella observed that, when $X$ is compact K\"ahler, the holomorphic extendability of the curvature form implies the integrability of the connection (cf. \cite{Pereira-Pirio}*{Proposition 7.1}). We therefore recover the same residue formula as Theorem \ref{thm:residue_integrable} in this case.

\begin{Theorem}
\label{thm:residue}
Let $X$ be a compact K\"ahler manifold, $L \to X$ a holomorphic line bundle, and $D$ a reduced divisor.
We set $X' := X \setminus D$.
Suppose that we are given a holomorphic connection $\nabla$ of $L$ over $X'$ such that its curvature form holomorphically extends over $X$. Then the connection $\nabla$ is integrable and the first Chern class of $L$ decomposes as
\[
c_1(L) = - c(\operatorname{Res}(\nabla)) \in H^{1,1}(X).
\]
\end{Theorem}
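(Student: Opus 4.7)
The plan is to reduce to Proposition \ref{prop:residue_with_curvature} and use Hodge theory on the compact K\"ahler manifold $X$ to force the curvature term to vanish.

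First, I would observe that the curvature $\Omega$, which by hypothesis extends holomorphically across $D$, is a global holomorphic $2$-form on the compact K\"ahler manifold $X$. A standard consequence of Hodge theory is that every global holomorphic $p$-form on a compact K\"ahler manifold is harmonic, hence $d$-closed; moreover the natural map from global holomorphic $p$-forms to the Dolbeault group $H^{p,0}(X)$ (or equivalently to the $(p,0)$-part of $H^p(X,\mathbb{C})$) is injective. I will apply this to $\Omega$.

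Second, I would invoke Proposition \ref{prop:residue_with_curvature} to obtain the identity
\[
c_1(L) + c(\operatorname{Res}(\nabla)) - \frac{i}{2\pi}[\Omega] = 0 \in H^{2}(X,\mathbb{C}).
\]
Now I would use the Hodge decomposition $H^2(X,\mathbb{C}) = H^{2,0}(X) \oplus H^{1,1}(X) \oplus H^{0,2}(X)$. The class $c_1(L)$ lies in $H^{1,1}(X)$, and each irreducible component $D_\nu$ of a divisor represents a class in $H^{1,1}(X)$, so $c(\operatorname{Res}(\nabla)) = \sum_\nu \operatorname{Res}_{D_\nu}(\nabla)\,c(D_\nu)$ also lies in $H^{1,1}(X)$. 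On the other hand $[\Omega] \in H^{2,0}(X)$ since $\Omega$ is a holomorphic (hence closed) $(2,0)$-form. Projecting the identity onto the $H^{2,0}$-component forces $[\Omega] = 0$ in $H^{2,0}(X)$.

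Third, by the injectivity of the representation of holomorphic forms in $H^{p,0}$ mentioned above, $[\Omega] = 0$ implies that $\Omega \equiv 0$ as a holomorphic $2$-form on $X$. In particular the curvature of $\nabla$ vanishes on $X'$, so $\nabla$ is integrable. Substituting back into the identity from Proposition \ref{prop:residue_with_curvature} yields
\[
c_1(L) = - c(\operatorname{Res}(\nabla)) \in H^{1,1}(X),
\]
which is the desired formula.

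The only subtle point is step two: one has to check that all terms other than $[\Omega]$ really lie in $H^{1,1}$ so that Hodge decomposition separates the curvature term cleanly. The calculational heart of the argument is entirely contained in Proposition \ref{prop:residue_with_curvature}; the present theorem is essentially an application of compact K\"ahler Hodge theory to upgrade the conclusion.
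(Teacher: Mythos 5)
Your argument is correct and follows essentially the same route as the paper's proof: apply Proposition \ref{prop:residue_with_curvature} and then use the Hodge decomposition $H^2(X,\mathbb{C})\simeq H^{2,0}\oplus H^{1,1}\oplus H^{0,2}$ to separate the $(1,1)$-classes $c_1(L)$ and $c(\operatorname{Res}(\nabla))$ from the $(2,0)$-class $[\Omega]$, forcing $\Omega=0$ and the stated formula. You have merely made explicit the type considerations and the injectivity of global holomorphic $2$-forms into $H^{2,0}(X)$ that the paper's two-line proof leaves implicit.
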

\begin{proof}
Since $X$ is compact K\"ahler, the following Hodge decomposition holds:
\[
H^2(X, \C) \simeq H^{2,0}(X) \oplus H^{1,1}(X) \oplus H^{0,2}(X).
\]
The formula in Proposition \ref{prop:residue_with_curvature} decomposes to $\Omega = 0$ and 
$c_1(L) = - c(\operatorname{Res}(\nabla))$.
\end{proof}

\section{Extension of closed holomorphic forms}
\label{sect:extension}

The aim of this section is to prove the following

\begin{Theorem}
\label{thm:extension}
	Let $X$ be a strongly pseudoconvex manifold of dimension $n \geq 2$ with maximal compact analytic subset $A$. Then every $d$-closed holomorphic $(n-1)$-form on $X\setminus A$ extends to a holomorphic $(n-1)$-form on $X$.
\end{Theorem}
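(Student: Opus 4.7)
The approach follows the signpost in the introduction: use $L^2$-theory for the $\opa$-operator on the complete K\"ahler manifold $X \setminus A$. First I would localize near $A$. Using the Remmert reduction $\phi \colon X \to Y$ to a normal Stein space $Y$, which contracts $A$ to a finite set $B$, one has $X\setminus A \cong Y\setminus B$ Stein, and after shrinking we may assume that $X$ is a strongly pseudoconvex neighborhood of $A$. Since $A$ is a compact analytic subset of the complex manifold $X$, the complement $X \setminus A$ admits a complete K\"ahler metric $g$, by a classical construction of Grauert.

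The central step is a cut-off plus $\opa$-solving argument. Choose a smooth function $\chi$ on $X$ equal to $1$ outside a neighborhood of $A$ and vanishing in a smaller neighborhood; then $\chi\omega$ extends smoothly by $0$ to $X$, and $\opa(\chi\omega) = \opa\chi \wedge \omega$ is a smooth, compactly supported, $\opa$-closed $(n-1, 1)$-form on $X$ whose support is disjoint from $A$. I would then solve $\opa u = \opa\chi \wedge \omega$ on $X \setminus A$ via H\"ormander's $L^2$-existence theorem on the complete K\"ahler manifold $(X\setminus A, g)$, using a weight function chosen so that the solution $u$ decays to zero along $A$. This produces a global holomorphic $(n-1,0)$-form $\chi\omega - u$ on $X \setminus A$, and the decay of $u$ then forces it to extend holomorphically across $A$, agreeing with $\omega$ outside the cut-off region.

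The $d$-closedness assumption, as opposed to mere holomorphicity, is essential: the logarithmic form $dz_1/z_1$ on $\C^n \setminus \{z_1 = 0\}$ is holomorphic and $d$-closed yet does not extend across the (non-compact) divisor. In our setting, however, the divisorial part of $A$ is compact and maximal. By the Laurent analysis already performed in the proof of Lemma \ref{lem:residue-existence}, $d$-closedness confines the singularities of $\omega$ along the smooth part of a divisorial component $D$ of $A$ to a logarithmic pole of the form $\operatorname{Res}_D(\omega) \, dz_1/z_1$; the compactness and maximality of $A$, combined with the $L^2$-bounds, will force these residues to vanish. On components of $A$ of codimension $\geq 2$, classical Riemann-type extension is already sufficient.

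The main technical obstacle is the coordinated choice of the complete K\"ahler metric $g$ and the H\"ormander weight function, so that the $L^2$-solution $u$ admits sufficiently sharp decay at $A$ to guarantee that $\chi\omega - u$ genuinely extends $\omega$ pointwise, rather than merely being cohomologous to it on $X \setminus A$. Here the completeness of $g$ provides the functional-analytic framework in which the $L^2$-Hodge theory applies, while the $d$-closedness of $\omega$ is what converts the abstract $L^2$-extension into the honest pointwise holomorphic extension required by the statement.
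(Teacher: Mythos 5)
Your proposal has the same skeleton as the paper's proof (cut off $\omega$ near $A$, solve a $\opa$-equation on the complement equipped with a complete K\"ahler metric, and correct by the solution), but it breaks down at the central step. The equation you want to solve, $\opa u = \opa\chi\wedge\omega$, lives in bidegree $(n-1,1)$, i.e.\ in the critical total degree $p+q=n$. H\"ormander's $L^2$-existence theorem does not apply there: on a complete K\"ahler manifold the Bochner--Kodaira--Nakano estimate yields vanishing of the $L^2$-cohomology only for $p+q\neq n$, and in degree $n$ one gets at best that $H^{n-1,1}_{L^2}$ is Hausdorff, so that the range of $\opa$ is characterized by moment conditions $\int f\wedge g=0$ against all $\opa$-closed $L^2$-forms $g$ of bidegree $(1,n-1)$. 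Verifying these moment conditions is precisely where the $d$-closedness of $\omega$ enters in the paper's argument: one replaces $g$ by its harmonic projection $h$, uses that $L^2$ harmonic forms on a complete K\"ahler manifold are $d$-closed, writes $h=dv$ near the exceptional set by Ohsawa's theorem on $L^2$ cohomology of isolated singularities, and concludes by Stokes using $d\omega=0$. Your proposal instead assigns the role of $d$-closedness to a residue/Laurent analysis along divisorial components of $A$; besides not being carried out (a $d$-closed holomorphic form on the complement need not be meromorphic along $A$ --- the computation in Lemma \ref{lem:residue-existence} only yields $\eta_-=f_{-1}\,dz_1/z_1+dg$ with $g$ a possibly infinite negative Laurent tail --- and no argument is given that the residues vanish), this misidentifies the actual obstruction, which is solvability in the critical degree.

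A second, related gap: you work directly on $X\setminus A$ with ``a complete K\"ahler metric of Grauert,'' but the quantitative steps --- the a priori estimate giving weighted decay of $u$ near $A$, and the verification that the extension of $u$ by zero still satisfies $\opa\tilde u=f$ across $A$ --- require explicit asymptotics of the metric near the singular locus. The paper obtains these by first passing to a Hironaka resolution so that the preimage of $A$ is a simple normal crossing divisor $E$, and then building a Saper-type metric whose local expression near $E$ is explicit; the extension on the original $X$ is then recovered at the end by a separate Serre-duality/moment-condition argument for $H^{n-1,1}_c(X)$. Without the resolution and an explicit metric, ``choose a weight so that $u$ decays'' is not a step one can point to a theorem for; it is exactly the part you defer as ``the main technical obstacle,'' and it is where the real work lies.
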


We point out that Theorem \ref{thm:extension} was proved in \cite{vSS}*{Corollary 1.4} using local algebraic Hodge theory; here we provide a new proof that is based on $L^2$ theory for the $\opa$-operator on complete K\"ahler manifolds.\\

\begin{Remark}  \label{rem:extension}
Under the same hypothesis of Theorem \ref{thm:extension}, every holomorphic $p$-form on $X\setminus A$ (not necessarily $d$-closed) extends to a holomorphic $p$-form on $X$ if $0\leq p \leq n-2$. A proof based on $L^2$ methods can be found in \cite{ohsawa}*{Corollary 7}.
\end{Remark}

\subsection{Preliminaries on $L^2$ theory for $\opa$ on complete  manifolds}

Let $X$ be a complex manifold of dimension $n$ endowed with a smooth Hermitian metric $\omega$. 
We write $\mathcal{D}^{p,q}$ for the sheaf of germs of compactly supported smooth differential forms of type $(p,q)$.
By $L^2_{p,q}(X,\omega)$ (resp. $L^2_k (X,\omega)$) we denote the space of $(p,q)$-forms on $X$ (resp. $k$-forms on $X$) that are integrable with respect to the global $L^2$-norm
$$\Vert u\Vert^2 = \int_X \vert u(x)\vert^2_\omega dV_\omega,$$
where $\vert u(x)\vert_\omega$ is the pointwise Hermitian norm and $dV_\omega$ is the volume form.

On $X$ we consider the differential operators $d,\pa,\opa$, as well as the corresponding Laplace operators (depending on the metric $\omega$)
$$\Delta = d d^\ast + d^\ast d,\quad \Delta^\prime = \pa\pa^\ast + \pa^\ast\pa,\quad \Delta^{\prime\prime} = \opa\opa^\ast + \opa^\ast\opa .$$
All these operators extend naturally as linear, closed, densely defined operators on the previously defined $L^2$-spaces in the sense of distributions.
We will then consider the spaces of harmonic forms
$$\mathcal{H}^k(X,\omega)= \lbrace u\in L^2_k(X,\omega)\mid \Delta u =0\rbrace,$$
$$\mathcal{H}^{p,q}(X,\omega)= \lbrace u\in L^2_{p,q}(X,\omega)\mid \Delta^{\prime\prime} u =0\rbrace.$$

We will also need the following $L^2$ de Rham and Dolbeault cohomology groups
$$H^k_{L^2}(X,\omega) = L^2_k(X,\omega)\cap \Ker d / L^2_k(X,\omega)\cap \Im d,$$
$$H^{p,q}_{L^2}(X,\omega) = L^2_{p,q}(X,\omega)\cap \Ker \opa / L^2_{p,q}(X,\omega)\cap \Im \opa.$$
Then, if $\omega$ is complete, we have a natural isomorphism
$$\mathcal{H}^{p,q}(X,\omega) \simeq H^{p,q}_{L^2}(X,\omega),$$
provided the latter cohomology group is Hausdorff, as well as the corresponding isomorphism for the $L^2$ de Rham cohomology. 
In the following subsection, we will also need weighted $L^2$-spaces of the type $L^2_{p,q}(X,\omega,\varphi)$, consisting of forms that are integrable with respect to the weighted $L^2$-norm
$$\Vert u\Vert_\varphi^2 = \int_X \vert u(x)\vert^2_\omega e^{-\varphi}dV_\omega,$$
where $\varphi: X\longrightarrow\R$ is a (smooth) weight function. In this case,  the adjoint $\opa^\ast_\varphi$ depends not only on the metric $\omega$, but also on the weight function $\varphi$.

\bigskip
We refer the reader to \cite{demailly-book} for the details on the discussion above.

\subsection{Proof of the extension result}

\begin{proof}[Proof of Theorem \ref{thm:extension}] Using standard modifications recalled in Section \ref{sect:mod}, we denote by $E=\pi^{-1}(A)$ a divisor with simple normal crossings in a strongly pseudoconvex  manifold $\tilde X$ such that $\pi: \tilde X\longrightarrow X$ is a proper holomorphic mapping. 

Moreover $\tilde X$ can be chosen  such that there exist nonsingular divisors $E_1,\ldots, E_m$ and positive integers $p_1,\ldots,p_m$ such that $E = E_1\cup\ldots\cup E_m$ and $\otimes_{i=1}^m \mathcal{O}(-p_i E_i)$ is very ample on $\tilde X$. In particular this means that $\tilde{X}$ is K\"ahler.\\

We will first show that  any $d$-closed holomorphic $(n-1)$-form $\Omega$ on $\tilde X\setminus E$, there exists a holomorphic extension of $\Omega$ to $\tilde X$.\\

To this extent we let $W$ be a sufficiently small  neighborhood of $E$ with smooth, strictly pseudoconvex boundary.
On $W\setminus E$, one can construct a complete K\"ahler metric $\omega$ with additional good properties, a so-called Saper metric. Here we recall the construction from \cite{ohsawa-isolated-sing}: There exists a nonsingular $m\times m$  matrix $(p_{ij})$ with positive integral coefficients such that
\begin{enumerate}
	\item $\otimes_{i=1}^m \mathcal{O}(-p_{ij}E_i)$ admits a Hermitian metric $h_j$ of positive curvature for all $j$.
	\item Let  $1\leq i_1 < \ldots < i_k\leq m$, $1\leq k\leq m$. Then $\det(p_{i_\alpha i_\beta})_{\alpha,\beta =1}^k\not=0$ whenever $\bigcap_{\alpha=1}^k E_{i_\alpha}\not=\emptyset$
\end{enumerate}
We choose holomorphic sections $s_i$ of $\mathcal{O}(E_i)$ over $\tilde X$ such that $E_i= \lbrace y\in \tilde X\mid s_i(y)=0\rbrace$, and define $\sigma_j$ to be the length of $s_1^{p_{1j}} \cdot\ldots\cdot s_m^{p_{mj}}$ with respect to $h_j$. Then $-\log\log\sigma_j^{-1}$ is a plurisubharmonic function on $W$. We set
$$\omega = -i\pa\opa\big( \sum_{j=1}^m  \log\log\sigma_j^{-1}\big) + i\pa\opa(-\log\delta) \quad\text{on $W\setminus E$,}$$
where $-\delta: \ol W\longrightarrow [0,1]$ is a smooth plurisubharmonic defining function for $W$ which is strictly plurisubharmonic on a neighborhood of $\pa W$.
Then $\omega$ is a complete K\"ahler metric on $W\setminus E$.\\

Moreover, there exists a smooth function $\psi$ on $W\setminus E$ such that $\omega = i\pa\opa\psi$ and $\vert \pa\psi\vert_\omega$ is bounded. As a consequence we get, using standard methods like the twisting trick from Ohsawa--Takegoshi \cite{ohsawa-takegoshi0},
\begin{enumerate}[label=(\alph*)] 
	\item $H^{p,q}_{L^2}(W\setminus E,\omega)=0$ if $p+q\not= n$;
    \item $H^{p,n-p}_{L^2}(W\setminus E,\omega)$ is Hausdorff for every $0\leq p\leq n$;
	\item $H^{k}_{L^2}(W\setminus E,\omega)=0$ if $k\not= n$;
	\item $H^{n}_{L^2}(W\setminus E,\omega)$ is Hausdorff.
\end{enumerate}

\begin{proof}[Sketch of proof]
Even though the above statements are classical, we sketch a proof for the reader's convenience. First we assume that $p+q\geq n+1$. For  $t\in (0,1)$ it then follows from the Bochner--Kodaira--Nakano inequality (see \cite{demailly-book})  that
	\begin{equation} \label{basic}
		\Vert\opa u\Vert^2_{t\psi} + \Vert\opa^\ast_{t\psi}u\Vert^2_{t\psi} \geq t\Vert u\Vert^2_{t\psi}\end{equation}
	for $u\in \mathcal{D}^{p,q}(W\setminus E)$.

Now we substitute $u = v e^{t\psi/2}$. It is not difficult to see that
$$
\vert \opa u\vert^2_\omega  e^{-t\psi} 
 = \vert \opa v + \frac{t}{2} \opa \psi \wedge v \vert^2_\omega  
 \leq 2 \left( \vert\opa v\vert^2_{\omega} + \frac{t^2}{4} \vert \opa\psi\vert^2_{\omega} \vert v\vert^2_{\omega} \right) 
 \leq 2 \vert\opa v\vert^2_{\omega} + \frac{t}{4} \vert v\vert^2_{\omega}
$$
if $t$ is chosen small enough such that $t\vert\opa\psi\vert^2_{\omega}\leq \frac{1}{2}$.
Since $\opa^\ast_{t\psi} = e^{t\psi} \opa^\ast e^{-t\psi}$, we likewise get
\begin{align*}
\vert\opa^\ast_{t\psi} u\vert^2_{\omega} e^{-t\psi} 
 \leq 2\vert\opa^\ast v\vert^2_{\omega} + \frac{t}{4}\vert v\vert^2_{\omega}.
\end{align*}

Together with (\ref{basic}), these two inequalities imply
\begin{equation}  \label{estspace2}
	 \frac{t}{2} \Vert v\Vert^2 \leq \Vert\opa v\Vert^2 + \Vert\opa^\ast v\Vert^2
\end{equation}
for all $v\in\mathcal{D}^{p,q}(W\setminus E)$, hence for all $v\in L^2_{p,q}(W\setminus E,\omega)$ by completeness of the metric $\omega$. This proves (a) for $p+q\geq n+1$. By duality we also obtain
\begin{equation*}  
	 \frac{t}{2} \Vert v\Vert^2 \leq \Vert\opa v\Vert^2 + \Vert\opa^\ast v\Vert^2
\end{equation*} 
for all $v\in L^2_{p,q}(W\setminus E,\omega)$ for $p+q\leq n-1$, which proves (a) and (b). The proof of (c) and (d) is similar.
\end{proof}

So in particular we have that $H^{n-1,1}_{L^2}(W\setminus E,\omega)$ is separated and thus $\Im (\opa: L^2_{n-1,0}(W\setminus E,\omega)\longrightarrow  L^2_{n-1,1}(W\setminus E,\omega))$ can be characterized by moment conditions:
\begin{align*}
\Im (\opa: L^2_{n-1,0}(W\setminus E,\omega)\longrightarrow  L^2_{n-1,1}(W\setminus E,\omega)) = \hspace{2cm}
  \\ \lbrace f\in L^2_{n-1,1}(W\setminus E,\omega)\mid \int_{W\setminus E} f\wedge g =0 \ \forall g\in L^2_{1,n-1}(W\setminus E,\omega) \cap\Ker \opa\rbrace.
\end{align*}

Now let $\chi\in\mathcal{C}^\infty(\tilde X)$ satisfy $\chi\equiv 1$ outside a sufficiently small neighborhood $V$ of $E$ in $W$ and $\chi\equiv 0$ in a smaller neighborhood $U\subset V$ of $E$. We set $f = \opa(\chi\Omega)$.\\

\begin{Claim}  $f\in \Im (\opa: L^2_{n-1,0}(W\setminus E,\omega)\longrightarrow  L^2_{n-1,1}(W\setminus E,\omega)) $.
	\end{Claim}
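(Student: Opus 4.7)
The plan is to verify the moment condition characterising $\Im(\opa)$ just stated, namely that $\int_{W\setminus E} f\wedge g = 0$ for every $g\in L^2_{1,n-1}(W\setminus E,\omega)\cap\ker\opa$. Since $\Omega$ is holomorphic, $f=\opa(\chi\Omega)=\opa\chi\wedge\Omega$ is smooth with compact support contained in $\overline{V\setminus U}\subset W\setminus E$, and in particular lies in $L^2_{n-1,1}(W\setminus E,\omega)$.

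The key observation is that $W\setminus E$ is Stein: the Remmert reduction of the strongly pseudoconvex $W$ contracts $E$ to a finite set $B$ in a normal Stein space $W'$, so $W\setminus E\simeq W'\setminus B$, and removing a discrete set from a Stein space leaves a Stein space. Hence $W\setminus E$ admits a smooth strictly plurisubharmonic exhaustion $\phi$, and for $c$ sufficiently large (and generic, to ensure smooth boundary) the sublevel set $D_c:=\{\phi<c\}$ is a relatively compact Stein open subset of $W\setminus E$ containing $\operatorname{supp} f$.

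Now fix $g\in L^2_{1,n-1}(W\setminus E,\omega)\cap\ker\opa$. Since $D_c$ is relatively compact in $W\setminus E$, the Saper metric is uniformly equivalent on $D_c$ to any smooth Hermitian metric, so $g\vert_{D_c}$ lies in the ordinary $L^2_{1,n-1}(D_c)$ and is distributionally $\opa$-closed. Because $D_c$ is a relatively compact Stein domain and $n-1\geq 1$, H\"ormander's $L^2$-theory furnishes $h\in L^2_{1,n-2}(D_c)$ with $\opa h = g\vert_{D_c}$.

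Using $\opa f = 0$ and the vanishing of every bidegree $(n+1,q)$-form on the $n$-dimensional complex manifold, one verifies the distributional identity
\[
d(f\wedge h) \;=\; \opa f\wedge h + (-1)^n f\wedge \opa h \;=\; (-1)^n f\wedge g.
\]
Since $f$ is compactly supported in $D_c$, the $L^1_{\mathrm{loc}}$-current $f\wedge h$ has compact support in $D_c$, so pairing with the constant $1$ gives $\int_{D_c}d(f\wedge h)=0$. Hence
\[
\int_{W\setminus E} f\wedge g \;=\; \int_{D_c} f\wedge g \;=\; (-1)^n\int_{D_c} d(f\wedge h) \;=\; 0,
\]
verifying the moment condition and showing $f\in\Im(\opa)$. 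The only nontrivial input beyond the setup is the Steinness of $W\setminus E$, which supplies the relatively compact pseudoconvex neighbourhood on which the $L^2$-equation $\opa h=g$ is solvable; the rest is bidegree bookkeeping and a single integration by parts.
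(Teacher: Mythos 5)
Your argument has a genuine gap, and it is located exactly where the real difficulty of the Claim lies. The assertion that $W\setminus E$ is Stein is false: the Remmert reduction does give $W\setminus E\simeq W'\setminus B$ with $W'$ Stein and $B$ finite, but removing a finite set from a Stein space of dimension $n\geq 2$ does \emph{not} leave a Stein space (by the Hartogs phenomenon every holomorphic function on $W'\setminus B$ extends across $B$, so $W'\setminus B$ is not holomorphically convex; the model case is $\C^n\setminus\{0\}$). For the same reason you cannot replace $W\setminus E$ by a relatively compact Stein shell $D_c$ containing $\operatorname{supp} f$: that support is a full collar separating $E$ from $\pa W$, and any such shell is pseudoconcave from the inside. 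Consequently the step ``H\"ormander gives $h$ with $\opa h=g|_{D_c}$'' is unavailable: the group $H^{1,n-1}_{L^2}(W\setminus E,\omega)$ is in general nonzero (this is precisely why the paper only establishes that it is \emph{Hausdorff} in middle total degree, not that it vanishes), and the whole content of the moment condition is to pair $f$ against the nontrivial harmonic space $\mathcal{H}^{1,n-1}(W\setminus E,\omega)$.

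A symptom that something must be wrong is that your argument never uses the $d$-closedness of $\Omega$. If the Claim held for an arbitrary holomorphic $(n-1)$-form $\Omega$ on $W\setminus E$, the rest of the section would extend every such form across $E$, contradicting the fact (cf.\ Remark \ref{rem:extension} and \cite{vSS}) that in degree $n-1$ the extension genuinely requires $d\Omega=0$. The paper's proof handles the nontrivial cohomology as follows: replace $g$ by its orthogonal projection $h$ onto $\mathcal{H}^{1,n-1}(W\setminus E,\omega)$ (legitimate because the image of $\opa$ is closed there and $f$ pairs trivially with it), use that a harmonic form for a \emph{complete K\"ahler} metric is $d$-closed, invoke Ohsawa's theorem to write $h=dv$ on a neighborhood of $E$, and then conclude by Stokes, where the final cancellation $\int_{\pa V}\Omega\wedge dv=\int_{\pa V}d(\Omega\wedge v)=0$ is exactly where $d\Omega=0$ enters. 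You would need to restore both ingredients --- the reduction to harmonic representatives and the use of $d\Omega=0$ --- for the proof to go through.
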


\begin{proof}
Let $g\in  L^2_{1,n-1}(W\setminus E,\omega) \cap\Ker \opa$ and $h$ its orthogonal projection onto $\mathcal{H}^{1,n-1}(W\setminus E,\omega)$. Then
$$\int_{W\setminus E}f\wedge g = \int_{W\setminus E} f\wedge h = \int_{W\setminus E} d(\chi\Omega)\wedge h = \int_{V\setminus U} d(\chi\Omega)\wedge h$$
Since $h$ is harmonic and $\omega$ is complete and K\"ahler, we have $dh=0$. But then, provided $V$ is a small enough neighborhood of $E$, it follows from \cite{ohsawa-isolated-sing}*{Theorem 15} that there exists $v\in L^2_{n-1}(V^\prime\setminus E,\omega)$ satisfying $dv = h$, where $V^\prime$ is a slightly larger open set containing $\ol V$. Since $h$ is harmonic, thus smooth in $W\setminus E$, it is no loss of generality to assume that $v$ is smooth in $V^\prime\setminus E$ as well. We get
\begin{align*}
\int_{W\setminus E}f\wedge g = \int_{V\setminus U} d(\chi\Omega)\wedge dv = \int_{V\setminus U} d(\chi\Omega\wedge dv)\\
= \int_{\pa V}\chi\Omega\wedge dv = \int_{\pa V}\Omega\wedge dv = \int_{\pa V}d(\Omega\wedge v)=0,
\end{align*}
where we have used the $d$-closedness of $\Omega$ in the last line. Hence $f$ satisfies the moment conditions, so there exists $u\in L^2_{n-1,0}(W\setminus E,\omega)$ satisfying $\opa u = f$ in $W\setminus E$.
\end{proof}

We next claim that we can improve the integrability of the solution $u$ of $\opa u = f$ near $E$.

\begin{Claim}
	There exists $\eta >0$ and $u\in  L^2_{n-1,0}(W\setminus E,\omega, \eta\varphi )$ satisfying $\opa u = f$, where $\varphi = -\sum_{j=1}^m\log\log\sigma_j^{-1}$.
\end{Claim}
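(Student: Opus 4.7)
The plan is to repeat the Bochner--Kodaira--Nakano argument used for estimates (a)--(d), but with the modified weight $t\psi + \eta\varphi$ in place of $t\psi$, for sufficiently small $t, \eta > 0$. First, I would observe that $\varphi$ is plurisubharmonic on $W$ (being the sum of the plurisubharmonic functions $-\log\log\sigma_j^{-1}$ already identified) and, by a direct computation in local coordinates around the smooth points and crossings of $E$, that $|\pa\varphi|_\omega$ is bounded on $W\setminus E$. Consequently, the curvature of the new weight satisfies
\[
i\pa\opa(t\psi + \eta\varphi) = t\omega + \eta\, i\pa\opa\varphi \geq t\omega > 0,
\]
while $|\pa(t\psi + \eta\varphi)|_\omega$ remains of order $O(t + \eta)$. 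The twisting substitution from the proof of (a)--(d), applied to the Bochner--Kodaira--Nakano inequality with this new weight, then yields, for $v \in \mathcal{D}^{p,q}(W\setminus E)$ with $p+q \geq n+1$ and after absorbing small terms,
\[
\frac{t}{4}\Vert v\Vert^2_{t\psi + \eta\varphi} \leq \Vert\opa v\Vert^2_{t\psi + \eta\varphi} + \Vert\opa^*_{t\psi + \eta\varphi} v\Vert^2_{t\psi + \eta\varphi}.
\]
By density from completeness of $\omega$, this extends to all of $L^2_{p,q}(W\setminus E, \omega, t\psi + \eta\varphi)$, and by duality an analogous estimate holds for $p+q \leq n-1$.

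As in (a)--(d), I would then deduce that $H^{p,q}_{L^2}(W\setminus E, \omega, t\psi + \eta\varphi) = 0$ for $p+q \neq n$ and is Hausdorff for $p+q = n$. In particular, the range of $\opa\colon L^2_{n-1,0}(\omega, t\psi + \eta\varphi) \to L^2_{n-1,1}(\omega, t\psi + \eta\varphi)$ is closed. Since $f = \opa(\chi\Omega)$ is compactly supported in $V\setminus U$, away from $E$, it lies trivially in this weighted space. To verify that $f$ is actually in the range, I would adapt the moment-condition argument of the previous Claim to the weighted setting: orthogonality of $f$ to $\Ker\opa^*_{t\psi + \eta\varphi}$ can be rephrased, via the Hodge star and the bilinear pairing $\int\cdot\wedge\cdot$, as a condition involving $\opa$-closed $(1, n-1)$-forms in the appropriately dualized weighted space. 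Ohsawa's local $d$-exactness of harmonic representatives \cite{ohsawa-isolated-sing}*{Theorem 15}, together with $d\Omega = 0$ and Stokes' theorem on $\pa V$, should then yield the vanishing of the pairing exactly as before.

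Finally, since $\psi \to -\infty$ only along $E$ itself, the factor $e^{-t\psi}$ is bounded from below on any fixed compact neighborhood of $E$ inside $W$, so integrability with the weight $t\psi + \eta\varphi$ near $E$ implies integrability with weight $\eta\varphi$ (up to modifying $\eta$), which is exactly the content of the Claim. I expect the main obstacle to be the adaptation of the moment-condition argument: the weighted K\"ahler identities introduce correction terms involving $\pa\varphi$ and $\opa\varphi$, and while the boundedness of $|\pa\varphi|_\omega$ combined with the smallness of $\eta$ should allow these corrections to be absorbed, careful bookkeeping of the weighted $\opa^*_{t\psi + \eta\varphi}$ and the weighted Laplacian will be required to confirm that harmonic representatives remain $d$-closed in the weighted setting.
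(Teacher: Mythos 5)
Your overall instinct --- add a small multiple $\eta\varphi$ of the plurisubharmonic function $\varphi$ to the weight and run the Ohsawa--Takegoshi twisting trick again --- is exactly the mechanism the paper uses. But your architecture differs in a way that creates a genuine gap. You propose to re-prove \emph{solvability} of $\opa u = f$ directly in the weighted space, which forces you to re-run the moment-condition argument there. That step does not go through as you describe: the obstruction space is now the $\Delta''$-harmonic space for a \emph{weighted} Laplacian (equivalently, via the pairing $\int\cdot\wedge\cdot$, a harmonic space with weight $-(t\psi+\eta\varphi)$), and the key fact used in the previous Claim --- that an $L^2$ harmonic form on a complete K\"ahler manifold is $d$-closed --- relies on the unweighted K\"ahler identity $\Delta''=\tfrac12\Delta$, which fails once a weight is introduced. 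Without $dh=0$ you cannot invoke \cite{ohsawa-isolated-sing}*{Theorem 15} and Stokes, so the vanishing of the pairing is not "bookkeeping" but the missing ingredient. There are secondary imprecisions as well: the duality step for $p+q\le n-1$ in a weighted space lands you in the space with the \emph{opposite} weight (one needs $\eta$ small relative to $t$ for the curvature $t\omega-\eta\, i\pa\opa\varphi$ to stay positive), and keeping $t\psi$ in the final norm weakens integrability near $\pa W$ (where $e^{-t\psi}\sim\delta^{t}\to 0$), so your last reduction only works near $E$, not globally on $W\setminus E$ as the Claim requires.

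The paper avoids all of this by \emph{not} re-proving existence. It regularizes the weight from below, $\varphi_\varepsilon=\max_\varepsilon(-\tfrac1\varepsilon,\varphi)$, which is still plurisubharmonic and now bounded, so that $L^2(\eta\varphi_\varepsilon)$ coincides with the unweighted space as a set. The twisting trick then gives the a priori estimate $\Vert u\Vert^2_{\eta\varphi_\varepsilon}\le C\Vert\opa u\Vert^2_{\eta\varphi_\varepsilon}$ for all $u\in L^2_{n-1,0}(W\setminus E,\omega)$, with $C$ \emph{independent of} $\varepsilon$ (in bidegree $(n-1,0)$ there is no $\opa^\ast$ term, hence no harmonic obstruction to worry about). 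Applying this to the unweighted solution $u$ already produced in the previous Claim, and using that $f$ is supported away from $E$ so that $\Vert f\Vert_{\eta\varphi_\varepsilon}$ is uniformly bounded, one lets $\varepsilon\searrow 0$ and concludes $\Vert u\Vert_{\eta\varphi}<\infty$ by monotone convergence. In short: the paper upgrades the integrability of the \emph{existing} solution via a uniform estimate, rather than constructing a new one; if you restructure your argument this way, the problematic weighted moment conditions disappear entirely.
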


\begin{proof}
$\varphi$ is a plurisubharmonic function, therefore also the functions
$$\varphi_\varepsilon = \max_\varepsilon (-\frac{1}{\varepsilon},\varphi),$$
where $\max_\varepsilon$ is a regularized maximum function, are plurisubharmonic. That is, for each $\varepsilon > 0$, we let $\lambda_\varepsilon$ be  a fixed non negative real smooth function on $\R$ such that, for all $x\in\R$, $\lambda_\varepsilon (x) = \lambda_\varepsilon (-x)$, $\vert x\vert \leq \lambda_\varepsilon (x)\leq \vert x\vert + \varepsilon$, $\vert\lambda_\varepsilon^\prime(x)\vert \leq 1$, $\lambda_\varepsilon^{\prime\prime}(x)\geq 0$ and  $\lambda_\varepsilon (x) = \vert x\vert$ if $\vert x\vert\geq \frac{\varepsilon}{2}$. We moreover assume that $\lambda_\varepsilon^\prime (x) > 0$ if $x> 0$ and $\lambda_\varepsilon^\prime (x) < 0$ if $x < 0$ and set $\max_\varepsilon (t,s) = \frac{t+s}{2} + \lambda_\varepsilon (\frac{t-s}{2})$ for $t,s\in\R$.\\

Using the twisting trick of Ohsawa--Takegoshi again, one can show  the following a priori estimate: For all $u\in L^2_{n-1,0}(W\setminus E,\omega)$ we have
\begin{equation} \label{apriori}
	\Vert u\Vert^2_{\eta\varphi_\varepsilon}\leq C \Vert \opa u\Vert^2_{\eta\varphi_\varepsilon}
\end{equation}
for some $\eta > 0$, where $C$ is independent of $\varepsilon$.\\

Now let $u\in L^2_{n-1,0}(W\setminus E,\omega)$ be any solution satisfying $\opa u = f$. Then, according to (\ref{apriori}), we have
$$\Vert u\Vert^2_{\eta\varphi_\varepsilon}\leq C \Vert f\Vert^2_{\eta\varphi_\varepsilon}$$
for any $\varepsilon > 0$, hence we get $\Vert u\Vert_{\eta\varphi} < \infty$. 
\end{proof}

We denote by $\tilde u$ the extension of $u$ by zero outside $W\setminus E$. We will now prove that we have $\opa\tilde u= f$ in $\tilde X$. This will follow from the next two claims.\\

\begin{Claim} $\opa \tilde u =0$ in $U$ in the sense of distributions.
	\end{Claim}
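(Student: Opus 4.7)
The plan is to establish the distributional identity $\opa\tilde u = 0$ on $U$, i.e.\ to show that $\int \tilde u\wedge\opa\phi = 0$ for every smooth compactly supported test $(1,n-1)$-form $\phi$ on $U$. The idea is to approximate $\tilde u$ by $\chi_\varepsilon u$ for a family of cutoffs $\chi_\varepsilon$ vanishing near $E$, apply Stokes' formula on the region where $\chi_\varepsilon u \wedge \phi$ is supported away from $E$, and kill the resulting error term involving $\opa\chi_\varepsilon$ in the limit $\varepsilon \to 0$ using the weighted integrability $\Vert u\Vert_{\eta\varphi}<\infty$ established in the previous claim.

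For the cutoffs I would fix a smooth $\rho\colon\R\to [0,1]$ with $\rho=1$ on $(-\infty,1]$ and $\rho=0$ on $[2,\infty)$, and set $\chi_\varepsilon := \rho(-\varepsilon\varphi)$. Since $\varphi\to -\infty$ on $E$, the cutoff $\chi_\varepsilon$ vanishes in a neighborhood of $E$ shrinking to $E$ and equals $1$ outside a slightly larger one; by the chain rule together with the boundedness of $|\pa\psi|_\omega$ (which gives $|\pa\varphi|_\omega \leq C$ near $E$, as $\psi - \varphi = -\log\delta$ is smooth near $E$), we obtain $|\opa\chi_\varepsilon|_\omega \leq C'\varepsilon$. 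The product $\chi_\varepsilon u\wedge\phi$ is then a top-degree form compactly supported in $\tilde X\setminus E$ and smooth there (by elliptic regularity, since $\opa u = f \equiv 0$ on $U\setminus E$ because $\chi\equiv 0$ on $U$). Stokes' formula yields
\[
0 = \int \opa(\chi_\varepsilon u\wedge\phi) = \int \opa\chi_\varepsilon\wedge u\wedge\phi + \int \chi_\varepsilon \opa u\wedge\phi + (-1)^{n-1}\!\int \chi_\varepsilon u\wedge\opa\phi,
\]
and the middle term vanishes on $U$, leaving the identity $\int \chi_\varepsilon u\wedge\opa\phi = (-1)^n \int \opa\chi_\varepsilon\wedge u\wedge\phi$.

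The left-hand side converges to $\pm \int \tilde u\wedge \opa\phi$ by dominated convergence: since $e^{-\eta\varphi}\geq 1$ near $E$ (where $\varphi\leq 0$), the bound $\Vert u\Vert_{\eta\varphi}<\infty$ forces $u$ to be locally $L^2$ near $E$, and hence locally integrable against smooth compactly supported forms. For the right-hand side I would apply Cauchy--Schwarz with weight $e^{\pm\eta\varphi}$:
\[
\left|\int \opa\chi_\varepsilon\wedge u\wedge\phi\right|^2 \leq \Vert u\Vert_{\eta\varphi}^2 \int_{\operatorname{supp}\opa\chi_\varepsilon} |\opa\chi_\varepsilon\wedge\phi|_\omega^2\, e^{\eta\varphi}\, dV_\omega.
\]
On $\operatorname{supp}\opa\chi_\varepsilon$ one has $-\varepsilon\varphi\in [1,2]$, so $e^{\eta\varphi}\leq e^{-\eta/\varepsilon}$ while $|\opa\chi_\varepsilon|_\omega\leq C'\varepsilon$; the first factor is exponentially small in $1/\varepsilon$, so the error tends to $0$ and one concludes $\int \tilde u\wedge \opa\phi = 0$.

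The main obstacle I foresee is the weighted estimate on $\operatorname{supp}\opa\chi_\varepsilon$: one must check that the exponential decay $e^{-\eta/\varepsilon}$ dominates any polynomial growth of $|\phi|_\omega$ in the $\sigma_j^{-1}$ that is inherent to the complete Saper metric near $E$, and that the local integrability of $\tilde u$ is justified in the correct sense. Both follow from careful bookkeeping on the explicitly given metric $\omega$, but this is precisely where the improved integrability $\Vert u\Vert_{\eta\varphi}<\infty$ from the previous claim becomes indispensable: an unweighted $L^2$ solution to $\opa u = f$ would not be enough to kill $\int \opa\chi_\varepsilon\wedge u\wedge \phi$ in the limit.
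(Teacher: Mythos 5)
Your overall strategy is the same as the paper's: cut off near $E$, integrate by parts, and kill the error term by a Cauchy--Schwarz inequality weighted by $e^{\pm\eta\varphi}$, with $\Vert u\Vert_{\eta\varphi}<\infty$ doing the essential work. (The paper uses the cutoff $\chi(j(\log\vert v_1\cdots v_k\vert^{-1})^{-1})$ and extracts the decay from the shrinking first factor $\int_{U_{kj}}\vert u\vert^2_\omega e^{-\eta\varphi}dV_\omega$, whereas you extract it from $e^{\eta\varphi}\leq e^{-\eta/\varepsilon}$ in the second factor; both mechanisms work.) However, the two points you defer to ``careful bookkeeping'' are exactly where the content of the proof lies, and as written they are gaps. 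The first concerns the dominated convergence on the left-hand side: you need $u\wedge\opa\phi$ to be integrable near $E$, i.e.\ $u$ locally integrable with respect to a smooth background metric $\omega_o$ --- this is also what makes $\tilde u$ a current at all. This does \emph{not} follow from ``$e^{-\eta\varphi}\geq 1$ forces $u$ to be locally $L^2$'': that only gives $\int\vert u\vert^2_\omega dV_\omega<\infty$ for the degenerate complete metric $\omega$, and since $dV_\omega\sim\vert v\vert^{-2}(\log\vert v\vert^{-1})^{-n-k}dV_{\omega_o}$, membership in $L^2(\omega)$ is not comparable to local integrability in the usual sense. The paper devotes a separate step to this: because $u$ has bidegree $(n-1,0)$ one has $\vert u\vert^2_\omega dV_\omega\gtrsim\vert u\vert^2_{\omega_o}(\log\vert v_1\cdots v_k\vert^{-1})^{-1}dV_{\omega_o}$, and Cauchy--Schwarz against the integrable function $\log\vert v_1\cdots v_k\vert^{-1}$ yields $u\in L^1_{n-1,0}(W\setminus E,\omega_o)$. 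This bidegree argument is indispensable and absent from your proposal.

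The second gap is in the error term. Your plan is to let the exponential $e^{-\eta/\varepsilon}$ dominate the ``polynomial growth of $\vert\phi\vert_\omega$''; taken pointwise this fails. On $\operatorname{supp}\opa\chi_\varepsilon$ one only knows $\log\vert v\vert^{-1}\lesssim e^{2/\varepsilon}$, so the bound $\vert\phi\vert^2_\omega\lesssim(\log\vert v\vert^{-1})^n$ gives $\vert\phi\vert^2_\omega\lesssim e^{2n/\varepsilon}$, which is not beaten by $e^{-\eta/\varepsilon}$ since $\eta$ is small. The estimate must be carried out on the product with the volume form, $\vert\phi\vert^2_\omega dV_\omega\lesssim\vert v\vert^{-2}(\log\vert v\vert^{-1})^{-k}dV_{\omega_o}$, which is only borderline divergent, so its integral over the transition region grows merely polynomially in $1/\varepsilon$ and is then killed by $e^{-\eta/\varepsilon}$; alternatively, write $e^{\eta\varphi}\leq e^{-\eta/2\varepsilon}e^{\eta\varphi/2}$ and invoke the paper's computation that $\int\vert g\vert^2_\omega e^{\eta'\varphi}dV_\omega<\infty$ for $\eta'>0$. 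With either fix (plus the $L^1$ lemma above, which the paper proves via the explicit metric asymptotics) your argument closes; without them the proof is incomplete precisely at its technical core.
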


\begin{proof}
We begin by estimating $\omega$ near $E$. So let $(v,w)= (v_1,\ldots ,v_k, w_1,\ldots, w_{n-k})$ denote the coordinates on a neighborhood $U_k$ of  a $k$-ple point of $E$ such that $v_1\cdot\ldots\cdot v_k=0$ is a local defining equation of $E$. Then, on $U_k$, we have
\begin{equation} \label{metric}
	\omega \sim \sum_{i=1}^k \frac{dv_i\wedge d\ol v_i}{\vert v_i\vert^2\log^2\vert v_1\cdot\ldots\cdot v_k\vert^{-1}} \\ + \frac{1}{\log\vert v_1\cdot\ldots\cdot v_k\vert^{-1}}
	(\sum_{i=1}^k dv_i\wedge d\ol v_i + \sum_{j=1}^{n-k} dw_j\wedge d\ol w_j)
	\end{equation}
	
We fix an arbitrary smooth Hermitian metric $\omega_o$ on $\tilde X$. Then, since $u$ is of bidegree $(n-1,0)$, we get from (\ref{metric}) that
$$\int_{U_k \setminus E} \vert u\vert^2_\omega dV_\omega \gtrsim \int_{U_k \setminus E} \vert u\vert^2_{\omega_o} \frac{1}{\log\vert v_1\cdot\ldots\cdot v_k\vert^{-1}}  dV_{\omega_o},$$
hence 
$$\big(\int_{U_k \setminus E} \vert u\vert_{\omega_o} dV_{\omega_o}\big)^2\leq \int_{U_k \setminus E} \vert u\vert^2_{\omega_o} \frac{1}{\log\vert v_1\cdot\ldots\cdot v_k\vert^{-1}}  dV_{\omega_o} \cdot 
\int_{U_k \setminus E} \log\vert v_1\cdot\ldots\cdot v_k\vert^{-1}dV_{\omega_o} < \infty.$$
Thus $u\in L^1_{n-1,0}(W\setminus E, \omega_o)$.

Now, to prove the claim, we have to show that
\begin{equation}  
	\int_{U} \tilde{u} \wedge \opa g = 0
\end{equation}
for $g\in \mathcal{D}^{1,n-1}(U)$. Using a partition of unity, we may assume that $\mathrm{supp}\ g$ is contained in $U_k$.
Note that it will be essential in the following argument that $u\in L^2_{n-1,0}(W\setminus E,\omega, \eta\varphi )$.

Let $\chi\in\mathcal{C}^\infty(\R,\R)$ be a function such that $\chi(t)=0$ for $t \leq \frac{1}{2}$ and $\chi(t)=1$ for $t \geq 1$. Set $\chi_j = \chi(j(\log\vert v_1\cdot\ldots\cdot v_k\vert^{-1})^{-1})$. Since $u\in L^1_{n-1,0}(W \setminus E,\omega_o)$, we have
$$\int_U \tilde u\wedge \opa g = \lim_{j\rightarrow \infty}\int_{U_k} u\wedge\chi_j\opa g= 
\lim_{j\rightarrow \infty}\int_{U_k}\tilde u\wedge\opa(\chi_j g) -\lim_{j\rightarrow \infty}\int_{U_k}\tilde u\wedge\opa\chi_j\wedge g.$$
From the Cauchy--Schwarz inequality we obtain
\begin{equation} \label{a}
	\vert\int_{U_k} u\wedge\opa\chi_j\wedge g\vert^2 \leq \int_{U_{kj}} \vert u\vert^2_\omega e^{-\eta\varphi} dV_\omega \cdot \int_{U_k}\vert\opa\chi_j\wedge g\vert^2_\omega e^{\eta\varphi} dV_\omega,
	\end{equation}
where $U_{kj}= \lbrace z\in U_k\mid 0\leq \vert v_1\cdot \ldots\cdot v_k\vert\leq \exp(-j)\rbrace$.

Now it follows from (\ref{metric}) that
$$\int_{U_k}\vert\opa\chi_j\wedge g\vert^2_\omega e^{\eta\varphi} dV_\omega\lesssim \int_{U_k}\vert g\vert^2_\omega e^{\eta\varphi} dV_\omega.$$
Moreover, since $g$ is an $n$-form,
$$\vert g\vert^2_\omega \lesssim (\log\vert v_1\cdot\ldots\cdot v_k\vert^{-1})^n$$
and 
$$dV_\omega \sim\vert v_1\cdot\ldots\cdot v_k\vert^{-2}(\log\vert v_1\cdot\ldots\cdot v_k\vert^{-1})^{-n-k}$$
if $\vert v\vert\leq \frac{1}{2}$. Therefore, for some $\tau > 0$

$$\vert g\vert^2_\omega e^{\eta\varphi}dV_\omega \lesssim \vert v_1\cdot\ldots\cdot v_k\vert^{-2} (\log\vert v_1\cdot\ldots\cdot v_k\vert^{-1})^{-k-\tau}$$
and thus
\begin{align*}
\int_{U_k}\vert g\vert^2_\omega e^{\eta\varphi} dV_\omega &\lesssim
\int_0^{1/2}\ldots\int_0^{1/2} (t_1\cdot\ldots\cdot t_k)^{-1}\times (\log(t_1\cdot\ldots\cdot t_k)^{-1})^{-k-\tau}dt_1\cdots dt_k\\
&\lesssim ( \int_0^{1/2}t^{-1}(\log t^{-1})^{-1-\tau/k})dt)^k < \infty.
\end{align*}
But this implies that $\lim_{j\rightarrow \infty}\int_{U_k}\tilde u\wedge\opa\chi_j\wedge g =0$, hence
$$\int_U \tilde u\wedge \opa g = \lim_{j\rightarrow \infty}\int_{U_k}\tilde u\wedge\opa(\chi_j g) =0,$$
since $\tilde u$ is $\opa$-closed in $U \setminus E$.
\end{proof}

\begin{Claim}
We also have $\opa\tilde u = 0$ outside $V$.
\end{Claim}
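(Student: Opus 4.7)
The goal is to show that $\opa \tilde u = 0$ as distributions on $\tilde X \setminus \ol V$, i.e.\ that $\int_{\tilde X} \tilde u \wedge \opa g = 0$ for every test form $g \in \mathcal{D}^{1, n-1}(\tilde X)$ whose support does not meet $\ol V$. Since $\tilde u$ is supported in $\ol{W \setminus E}$ and $g$ vanishes near $E \subset V$, this pairing reduces to $\int_{W \setminus \ol V} u \wedge \opa g$. The first observation is that on $W \setminus \ol V$ we have $\chi \equiv 1$, so $\opa u = f = \opa(\chi\Omega) = \opa\Omega = 0$ there; consequently $u$ is a smooth holomorphic $(n-1, 0)$-form on $W \setminus \ol V$.

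The main obstacle is the jump of $\tilde u$ across $\pa W$, which naively would produce a boundary term $\int_{\pa W} u \wedge g$ upon integration by parts. To absorb this, we exploit the completeness of the Saper metric $\omega$ toward $\pa W$ together with its bounded-potential property: since $\omega = i\pa\opa\psi$ with $|\pa\psi|_\omega$ bounded on $W \setminus E$, one can construct smooth cutoffs $\rho_\nu \in C^\infty_c(W)$ satisfying $\rho_\nu \nearrow 1$ on $W$ and $|\opa\rho_\nu|_\omega \leq C/\nu$ uniformly---for instance by setting $\rho_\nu = \lambda(-\log(-\delta)/\nu)$ for a suitable cutoff $\lambda \colon \R \to [0,1]$.

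For each $\nu$, the form $\rho_\nu g$ has compact support in $W \setminus \ol V$, where $u$ is smooth. Applying Stokes' theorem to $u \wedge \rho_\nu g$ (noting that contributions of holomorphic bidegree larger than $n$ automatically vanish) yields $\int u \wedge \opa(\rho_\nu g) = 0$, which by Leibniz rewrites as $\int \rho_\nu u \wedge \opa g = -\int u \wedge \opa\rho_\nu \wedge g$. The left-hand side converges to $\int_{W \setminus \ol V} u \wedge \opa g$ by dominated convergence, using that $u$ is bounded on the compact set $\mathrm{supp}\, \opa g \subset W \setminus \ol V$. The right-hand side is controlled via Cauchy--Schwarz and the cutoff estimate by $(C/\nu)\|u\|_{L^2(\omega)}\|g\|_{L^2(\omega, \mathrm{supp}\, g)}$, where the $L^2$-norm of $g$ is finite because for a $(1,n-1)$-form the pointwise identity $|g|^2_\omega \, dV_\omega = c \cdot g \wedge \bar g$ holds up to a universal constant and $g \wedge \bar g$ is bounded on $\mathrm{supp}\, g$. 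Letting $\nu \to \infty$ delivers the claim.
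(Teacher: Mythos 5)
Your strategy is the same as the paper's at heart: reduce to $\int_{W\setminus \ol V}u\wedge\opa g$, use that $\opa u=f=0$ there, cut off near $\pa W$, integrate by parts, and kill the commutator term using the degeneration of the Saper metric at $\pa W$. But two of the justifications fail as written, and the second one conceals a real quantitative problem. First, $\mathrm{supp}\,\opa g$ is \emph{not} a compact subset of $W\setminus\ol V$: the test form $g$ is supported in $\tilde X\setminus V$, so its support may (and in the only interesting case does) meet $\pa W$ and the exterior of $W$ --- if it were compactly contained in $W\setminus\ol V$ there would be nothing to prove, since $\opa u=0$ holds there in the distributional sense already. Consequently you cannot appeal to boundedness of $u$; $u$ is holomorphic on the open set $W\setminus\ol V$ with no a priori control at $\pa W$. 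The dominated convergence step for $\int\rho_\nu u\wedge\opa g$ needs $u\wedge\opa g\in L^1$ up to $\pa W$, and this is exactly what the paper's preliminary estimate supplies: since $u$ has bidegree $(n-1,0)$ and $\omega\geq i\pa\opa(-\delta)/\delta\gtrsim\delta^{-1}\omega_o$ near $\pa W$, the bound $\|u\|_{L^2(\omega)}<\infty$ yields $\int_{W\setminus U}|u|^2_{\omega_o}\delta^{-1}dV_{\omega_o}<\infty$, hence $u\in L^1(\omega_o)$ near $\pa W$.

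Second, and more seriously, the quantity $\|g\|_{L^2(\omega,\,\mathrm{supp}\,g\cap W)}$ is in general \emph{infinite}, so the bound $(C/\nu)\|u\|_{L^2(\omega)}\|g\|_{L^2(\omega)}$ is vacuous. The pointwise identity $|g|^2_\omega\,dV_\omega=c\,g\wedge\ol g$ is false for general $(1,n-1)$-forms (it is tied to primitivity; e.g.\ on a surface $g=dz_1\wedge d\ol z_1$ has $g\wedge\ol g=0$ but $|g|_\omega\neq0$), and the middle-degree $L^2$ density is not monotone under enlarging the metric. Concretely, near $\pa W$ the eigenvalues of $\omega$ relative to a smooth metric $\omega_o$ are $\sim\delta^{-2}$ in the complex normal direction and $\sim\delta^{-1}$ tangentially, so the components of $g$ containing neither $dz_1$ nor $d\ol z_1$ ($z_1$ the normal coordinate) contribute $\sim\delta^{-1}|g|^2_{\omega_o}\,dV_{\omega_o}$ to $|g|^2_\omega\,dV_\omega$, whose integral diverges logarithmically at $\pa W$. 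Your argument can still be rescued, because the integrand is supported in $\mathrm{supp}\,\opa\rho_\nu\subset\{e^{-2\nu}\lesssim\delta\lesssim e^{-\nu}\}$, where $\|g\|^2_{L^2(\omega)}=O(\nu)$, giving an error $O(\nu^{-1}\cdot\nu^{1/2})\to0$; but this has to be computed, not asserted from finiteness of $\|g\|_{L^2(\omega)}$. The paper sidesteps the issue entirely by measuring $\alpha$ and $\opa\chi_j$ in the fixed smooth metric $\omega_o$ and splitting the weights in Cauchy--Schwarz as $\delta^{-1}\cdot\delta$: with $\chi_j=\chi(j\delta)$ one has $\int|\opa\chi_j|^2_{\omega_o}\delta\,dV_{\omega_o}=O(1)$ while $\int_{\{0<\delta\leq1/j\}}|u|^2_{\omega_o}\delta^{-1}dV_{\omega_o}\to0$, so the entire singular weight is carried by $u$, for which the weighted estimate is available.
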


\begin{proof}
We have to show that $$0=\int_{\tilde X\setminus V}\tilde u \wedge\opa\alpha = \int_{W\setminus V} u\wedge\opa\alpha$$ for all $\alpha\in\mathcal{D}^{1,n-1}(\tilde X\setminus V)$.\\

To see this, we again fix an arbitrary smooth Hermitian metric $\omega_o$ on $\tilde X$. Since $i\pa\opa(-\log\delta) = i\frac{\pa\opa(-\delta)}{\delta} + i\pa\log\delta\wedge\opa\log\delta$, we may conclude that we have \\
$\int_{W\setminus U}\vert u\vert^2_{\omega_o}\delta^{-1}dV_{\omega_o} < \infty$. \\

	Now let $\chi\in\mathcal{C}^\infty(\R,\R)$ be a function such that $\chi(t)=0$ for $t \leq \frac{1}{2}$ and $\chi(t)=1$ for $t \geq 1$. Set $\chi_j = \chi(j\delta)\in\mathcal{D}(W)$. 
	Then $\chi_j \alpha \in\mathcal{D}^{1,n-1}(W\setminus V)$, and since $\opa u =0$ in $W\setminus V$, we therefore have
	$$0=\int_{W\setminus V} u\wedge \opa(\chi_j \alpha) = \int_{W\setminus V} u\wedge\opa \chi_j \wedge\alpha +  \int_{W\setminus V} u\wedge \chi_j \opa \alpha.$$
	As $u$ has $L^2$ coefficients on $W$, the integral of $u\wedge\chi_j \opa\alpha$   converges  to the integral of $u\wedge \opa\alpha$  as $j$ tends to infinity.  
	The remaining term can be estimated as follows: using the Cauchy--Schwarz inequality we have
	\begin{equation*}  
		\left\vert\int_{W} u\wedge \opa\chi_j \wedge \alpha\right\vert^2 \leq
		\sup_{W}\vert \alpha\vert^2_{\omega_o} \int_{\lbrace 0<\delta \leq \frac{1}{j}\rbrace} \vert u\vert^2_{\omega_o}\delta^{-1}dV_{\omega_o} \cdot \int_{W}\vert\opa\chi_j\vert^2_{\omega_o}\delta dV_{\omega_o}.
	\end{equation*}
	Note that the last integral is bounded.
	Since $\int_{W\setminus U}\vert u\vert^2_{\omega_o}\delta^{-1}dV_{\omega_o} < \infty$, the integral $\int_{\lbrace 0 <\delta \leq \frac{1}{j}\rbrace} \vert u\vert^2_{\omega_o}\delta^{-1}dV_{\omega_o}$ converges to $0$ when $j$ tends to infinity. 

We have thus proved that $\int_{W\setminus V} u\wedge\opa\alpha =0$, hence $\opa\tilde u =0$ outside $V$.
\end{proof}

Combining the above two claims, we have thus proved that  $\opa\tilde u =f$ in $\tilde{X}$. Setting $\tilde\Omega = \chi\Omega- \tilde f$ we then obtain a holomorphic extension of $\Omega$ to $\tilde X$.\\
	
Now we are ready to prove the desired extension result on the original manifold $X$. Namely let $\Omega$ be a $d$-closed holomorphic $(n-1)$ form on $X\setminus A$. We  choose  a smooth $(n-1)$-form $\hat\Omega$ on $X$ such that $\hat\Omega = \Omega$ outside a compact set of $X$  and set $f = \opa\hat\Omega\in \mathcal{D}^{n-1,1}(X)$. Then, by standard $\opa$-theory, $\Omega$ admits a holomorphic extension to $X$ if the equation $\opa u = f$ admits a smooth solution $u$ with compact support in $X$. Since $X$ is not compact, it follows from Serre duality that the Dolbeault cohomology group with compact support in $X$, $H_c^{n-1,1}(X)$, is separated. Therefore a sufficient condition for the existence of a compactly supported solution $u$ of $\opa u = f$ is that
$$\int_X f\wedge g = 0 \qquad \forall g\in\mathcal{C}^\infty_{1,n-1}(X)\cap\mathrm{Ker}(\opa).$$
Choose a relatively compact domain $D$ with smooth boundary  in $X$ such that $\mathrm{supp} f\subset D$. Then for $ g\in\mathcal{C}^\infty_{1,n-1}(X)\cap\mathrm{Ker}(\opa)$ we get
\begin{align*}
    \int_X f\wedge g & =  \int_D \opa\hat\Omega\wedge g 
    =  \int_D d(\hat\Omega\wedge g) \\
    & = \int_{\pa D} \Omega\wedge g = \int_{\pi^{-1}(\pa D)} \pi^\ast(\Omega) \wedge\pi^\ast (g)
\end{align*}
Now $\pi^{-1}(\pa D)$ bounds a domain $\tilde D$ in $\tilde X$, and we have proved before that $\pi^\ast(\Omega)$ admits a holomorphic extension $\tilde \Omega$ over the exceptional divisor $E$. But then
$$\int_{\pi^{-1}(\pa D)} \pi^\ast(\Omega) \wedge\pi^\ast (g) = \int_{\pa\tilde D} \tilde\Omega\wedge\pi^\ast(g) = \int_{\tilde D}d(\tilde\Omega\wedge\pi^\ast(g)) = \int_{\tilde D}\opa(\tilde\Omega\wedge\pi^\ast(g))=0,$$
which concludes the proof.
\end{proof}

\section{Applications}
\label{sect:applications}

\subsection{Another proof of Brunella's conjecture}
In this subsection, we combine the residue formula (Theorem \ref{thm:residue}) with the extension result (Theorem \ref{thm:extension}) to 
give another proof of the following theorem, which was conjectured by Brunella \cite{brunella1} and recently confirmed in \cite{adachi-brinkschulte}:

\begin{Theorem}[\cite{adachi-brinkschulte}] \label{thm:brunella}
Let $X$ be a compact complex manifold of dimension $n \geq 3$.
Let $\mathcal{F}$ be a codimension one holomorphic foliation on $X$ with ample normal bundle $N_\mathcal{F}$. 
Then every leaf of $\mathcal{F}$ accumulates to $\Sing(\mathcal{F})$.
\end{Theorem}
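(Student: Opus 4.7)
The plan is to argue by contradiction, combining Brunella's classical reduction to a minimal set with the residue formula (Theorem \ref{thm:residue}) and the extension result (Theorem \ref{thm:extension}). Assume some leaf $L_0$ of $\mathcal{F}$ has $\overline{L_0}\cap\Sing(\mathcal{F})=\varnothing$, so by a Zorn's lemma argument there exists a non-empty minimal compact $\mathcal{F}$-invariant set $K\subset X\setminus\Sing(\mathcal{F})$. Since $N_\mathcal{F}$ is ample, Kodaira's embedding theorem makes $X$ projective, hence compact K\"ahler, and bimeromorphic modifications of $X$ remain K\"ahler; so Theorem \ref{thm:residue} will apply on any such modification. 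The goal is to build a holomorphic connection on (the pullback of) $N_\mathcal{F}$ whose residue divisor is supported on a $\pi$-exceptional divisor, forcing $c_1(N_\mathcal{F})=0$ and contradicting ampleness.

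The first geometric step, due to Brunella, is that $Y:=X\setminus K$ is strongly pseudoconvex. A positively curved Hermitian metric on $N_\mathcal{F}$ combined with local first integrals of $\mathcal{F}$ near $K$ yields a strictly plurisubharmonic exhaustion of $Y$ off a compact subset. Let $A\subset Y$ be the maximal compact analytic subset; since $\Sing(\mathcal{F})$ is compact analytic in $Y$, it is contained in $A$. Via a Hironaka modification $\pi\colon\widetilde X\to X$ centered on $\Sing(\mathcal{F})$, I replace $X$ by a K\"ahler manifold $\widetilde X$ on which $D:=\pi^{-1}(\Sing(\mathcal{F}))$ is a simple normal crossing divisor; then $\widetilde Y:=\pi^{-1}(Y)=\widetilde X\setminus K$ is 1-convex with maximal compact analytic subset $\widetilde A\supseteq D$.

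The central step is to construct a foliation-adapted holomorphic connection $\nabla$ on $\pi^*N_\mathcal{F}$ over $\widetilde X\setminus D$. Locally, integrability $\omega_\mu\wedge d\omega_\mu=0$ produces a holomorphic $\eta_\mu$ satisfying $d\omega_\mu=\eta_\mu\wedge\omega_\mu$, and the cocycle $\{g_{\mu\nu}\}$ of $\pi^*N_\mathcal{F}$ enforces $\eta_\mu-\eta_\nu\equiv d\log g_{\mu\nu}\pmod{\omega_\mu}$; so $\{\eta_\mu\}$ is a holomorphic connection on $\pi^*N_\mathcal{F}$ up to the choice of representatives modulo $\omega_\mu$. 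To pin down a global $\nabla$ I proceed in two stages: on $\widetilde Y\setminus\widetilde A$, pass to the Remmert reduction of $\widetilde Y$ (a normal Stein space, where Cartan's Theorem B produces the required cohomology vanishings) to make a coherent choice of representatives; on a neighborhood of $K$, the local formula directly produces a connection because $\mathcal{F}$ is regular there; matching these on the overlap gives $\nabla$ on $\widetilde X\setminus D$. The curvature $\Omega=d\eta$ is then a holomorphic $2$-form on $\widetilde X\setminus D$. When $n\geq 4$, Remark \ref{rem:extension} applied on the 1-convex $\widetilde Y$ extends $\Omega$ holomorphically across $\widetilde A$, since $2\leq n-2$. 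When $n=3$, $\Omega$ is a $d$-closed holomorphic $(n-1)$-form (Bianchi identity $d^2=0$), and Theorem \ref{thm:extension} delivers the extension across $\widetilde A$. In either case, combined with the fact that $\Omega$ is already defined on a neighborhood of $K\subset\widetilde X\setminus\widetilde Y$, one obtains a holomorphic $2$-form on all of $\widetilde X$.

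Theorem \ref{thm:residue} on the compact K\"ahler manifold $\widetilde X$ then yields
\[
c_1(\pi^*N_\mathcal{F})=-c(\operatorname{Res}(\nabla))\in H^{1,1}(\widetilde X).
\]
Since $\operatorname{Res}(\nabla)$ is supported on $D=\pi^{-1}(\Sing(\mathcal{F}))$ and every component of $D$ is $\pi$-exceptional (contracting to the codimension-$\geq 2$ set $\Sing(\mathcal{F})\subset X$), pushing forward by $\pi_*$ gives $\pi_*c(\operatorname{Res}(\nabla))=0$ in $H^2(X,\C)$; hence $c_1(N_\mathcal{F})=\pi_*c_1(\pi^*N_\mathcal{F})=0$, contradicting the ampleness of $N_\mathcal{F}$. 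The main obstacle is the global construction of the foliation-adapted connection $\nabla$ on $\widetilde X\setminus D$: the coherent choice of representatives of $\eta_\mu$ modulo $\omega_\mu$ across charts, and the subsequent matching across $K$, is the technical heart of the argument, and it is here that the $1$-convexity of $\widetilde Y$ together with its Remmert reduction to a Stein space must be used essentially.
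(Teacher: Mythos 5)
Your overall strategy---contradiction, Brunella's convexity of the complement of the invariant compact set, a holomorphic connection on the normal bundle away from the exceptional set, extension of the curvature via Theorem \ref{thm:extension} and Remark \ref{rem:extension} (your case split $n=3$ versus $n\geq 4$ is exactly why both are needed), and then Theorem \ref{thm:residue}---matches the paper up to the residue formula. But there is a genuine gap in where the connection lives. The holomorphic connection of \cite{adachi-brinkschulte}*{Section 3.1} is defined on $N_\mathcal{F}|_{X\setminus A}$, where $A$ is the \emph{maximal compact analytic subset} of the $1$-convex complement, not on the complement of $\Sing(\mathcal{F})$: the coherent choice of the local forms $\eta_\mu$ modulo $\omega_\mu$ uses the Stein property of the Remmert reduction, which only contracts $A$, and there is no reason the resulting connection extends across the components of $A$ not lying in $\Sing(\mathcal{F})$. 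So the Hironaka modification must be centered on $A$, and the residue divisor is supported on $\pi^{-1}(A)$, not on $\pi^{-1}(\Sing(\mathcal{F}))$ as you assert.

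This breaks your endgame. The set $A$ may contain compact hypersurfaces of $X$ (exceptional sets of Remmert reductions routinely contain divisors contracted to points), and the corresponding components of $\pi^{-1}(A)$ are not $\pi$-exceptional; for those, $\pi_*[D_\nu]\neq 0$. Hence the pushforward argument does not yield $c_1(N_\mathcal{F})=0$; it only localizes $c_1(N_\mathcal{F})$ on divisors contained in $A$, i.e.\ disjoint from $M=\overline{\mathcal{L}}$. That localization is precisely what the paper uses, but the contradiction must then be extracted differently: choose metrics $h_\nu$ on the bundles $\mathcal{O}(D_\nu)$ with curvature supported away from $\pi^{-1}(M)$, apply the $\partial\overline{\partial}$-lemma to the identity $c_1(L)=-\sum_\nu \operatorname{Res}_{D_\nu}(\pi^*\nabla_{\rm hol})[D_\nu]$ to produce a flat metric $h_0$ of $L=\pi^*N_\mathcal{F}$ near $\pi^{-1}(M)$, and compare it with a leafwise positively curved metric $h_+$ furnished by ampleness: the continuous function $-\log(h_+/h_0)$ is leafwise strictly plurisubharmonic on the compact invariant set $\pi^{-1}(M)$ and violates the maximum principle on the leaf where it attains its maximum. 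Your ``$c_1=0$'' shortcut skips this step and, as written, does not go through.
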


The main point of the proof in \cite{adachi-brinkschulte} was a localization argument for the first Chern class $c_1(N_\mathcal{F})$ of the normal bundle to the holomorphic foliation $\mathcal{F}$.
Due to the lack of a suitable residue formula, 
the localization was proved in \cite{adachi-brinkschulte} via the first Atiyah class $a_1(N_\mathcal{F})$.
We can give more direct approach to this localization thanks to Theorems \ref{thm:residue} and \ref{thm:extension}.

\begin{proof}
The proof is by contradiction. Assume that there exists a leaf $\mathcal{L}$ whose closure 
$M := \overline{\mathcal{L}}$ does not intersect  $S := \Sing(\mathcal{F})$.
We may assume that $X$ is connected by replacing it with the connected component containing $M$ if necessary.
From Brunella's convexity result \cite{brunella2}*{Proposition 3.1}, 
$X \setminus M$ is strongly pseudoconvex.
Denote by $A$ the maximal compact analytic set of $X \setminus M$,
and by $S^*$ the codimension two part of $S$. 
Note that $S^* \subset A$ since $\dim X \geq 3$.
From \cite{adachi-brinkschulte}*{Section 3.1}, we have a holomorphic connection $\nabla_{\rm hol}$ of $N_\mathcal{F}|_{X \setminus A}$.

Note that $X$ is projective since $X$ possesses an ample line bundle $N_\mathcal{F}$.
Thanks to Hironaka's desingularization (see Section \ref{sect:mod}),  we may assume that there exists a proper holomorphic mapping $\pi: \tilde X \longrightarrow X$ such that  $D=\pi^{-1}(A)$ is a simple normal crossing divisor in a projective manifold $\tilde{X}$.
The pull-back bundle $L := \pi^*N_{\mathcal{F}}$ is a holomorphic line bundle admitting a holomorphic connection $\pi^*\nabla_{\rm hol}$ over $\tilde{X} \setminus D$. 
Denote by $\Omega$ the curvature form of $\pi^*\nabla_{\rm hol}$, which is a closed holomorphic 2-form on $\tilde{X} \setminus D$.
From Theorem \ref{thm:extension} and Remark \ref{rem:extension}, we see that $\Omega$ extends over $D$ since $\dim \tilde{X} = \dim X \geq 3$. 
Then, from Theorem \ref{thm:residue}, $\pi^*\nabla_{\rm hol}$ is, in fact, integrable and the first Chern class of $L$ decomposes as
\[
c_1(L) = - \sum_{\nu=1}^N \operatorname{Res}_{D_\nu}(\pi^*{\nabla_{\rm hol}}) [D_\nu] \in H^{1,1}(\tilde{X})
\]
where $D = \sum_{\nu=1}^N D_\nu$ is the irreducible decomposition.

The rest of the proof is exactly as in \cites{canales,adachi-biard}. 
We take an arbitrary smooth Hermitian metric $h_L$ of $L$, and Hermitian metrics $h_\nu$ of the line bundle 
defined by $D_\nu$ so that the curvature of $h_\nu$ has support in $\tilde{X} \setminus \pi^{-1}(M)$.
Then, from the $\partial\overline{\partial}$-lemma, there exists a smooth real function $\psi$ on $Y$ such that 
\[
i\Theta_{h_L} = - \sum_{\nu=1}^N \Re (\operatorname{Res}_{D_\nu}({\nabla})) i\Theta_{h_\nu} + i\partial\overline{\partial} \psi.
\]
Then, $h_0 := h_L e^{-\psi}$ gives a smooth flat Hermitian metric of $L$ over an open neighborhood of $\pi^{-1}(M)$.

Since we assume that $N_{\mathcal{F}}$ is ample, there exists a Hermitian metric $h_+$ of $L$ over $\pi^{-1}(M)$ with (leafwise) positive curvature. 
Then, $\varphi := -\log h_+/h_0 \colon \pi^{-1}(M) \to \R$ is a continuous function which is leafwise strictly plurisubharmonic.
But this violates the maximum principle for strictly plurisubharmonic functions on the leaf where $\varphi$ takes its maximum.
This contradiction completes the proof. 
\end{proof}

\subsection{Another proof for a non-existence theorem of certain Levi flat hypersurfaces}

In this subsection, we remark a simplified proof for the following theorem:

\begin{Theorem}[\cites{canales,adachi-biard}] \label{thm:affine}
Let $X$ be a compact K\"ahler surface and $M$ a real analytic closed Levi flat hypersurface whose complement is 1-convex. 
Then the Levi foliation of $M$ cannot be transversely affine. 
\end{Theorem}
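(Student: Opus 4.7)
The plan is to parallel the structure of the proof of Theorem \ref{thm:brunella}: globally extend the transversely affine structure of the Levi foliation to an integrable meromorphic connection on the normal bundle $N_\mathcal{F}$, apply the localization Theorem \ref{thm:residue_integrable} to push $c_1(N_\mathcal{F})$ onto a divisor disjoint from $M$, and then construct via the $\pa\opa$-lemma a Hermitian metric on $N_\mathcal{F}$ that is flat on a neighborhood of $M$, contradicting the Levi flat hypothesis.

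Since $M$ is real analytic and Levi flat, its Levi foliation extends to a holomorphic codimension one foliation $\mathcal{F}$ on an open neighborhood $U$ of $M$ in $X$. The transversely affine hypothesis provides on $U$ a pair $(\omega,\eta)$ with $\omega$ an $N_\mathcal{F}$-valued holomorphic $1$-form defining $\mathcal{F}$ and $\eta$ a $d$-closed holomorphic $1$-form satisfying $d\omega=\eta\wedge\omega$; in particular $\eta$ is an integrable holomorphic connection $1$-form on $N_\mathcal{F}|_U$. To extend this data to all of $X$, I would use that $X\setminus M$ is $1$-convex: letting $A$ be its maximal compact analytic subset and $\phi\colon X\setminus M\to Y$ its Remmert reduction onto a normal Stein space $Y$ with $B:=\phi(A)$ a finite set, and choosing $U$ disjoint from $A$, the data $(\omega,\eta)$ push forward under $\phi$ to a neighborhood of $B$ in $Y$. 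Then, invoking Pereira's extension argument for foliations across exceptional sets---together with Levi- and Bochner--Hartogs-type extensions of meromorphic forms across the finite subset $B$ of the normal Stein space $Y$---one extends $\mathcal{F}$ to a codimension one holomorphic foliation on all of $X$ and $\eta$ to an integrable meromorphic connection $\nabla$ on $N_\mathcal{F}$ over $X$, whose singular divisor $D$ is contained in $A\cup\Sing(\mathcal{F})$ and is, in particular, disjoint from $M$.

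Theorem \ref{thm:residue_integrable} applied to $\nabla$ then localizes the first Chern class,
\[
c_1(N_\mathcal{F}) = -c(\operatorname{Res}(\nabla)) \in H^2(X,\C),
\]
and the Hodge decomposition on the compact K\"ahler surface $X$ places this class in $H^{1,1}(X)$. Exactly as in the final paragraph of the proof of Theorem \ref{thm:brunella}, I would fix smooth Hermitian metrics $h_\nu$ on $\mathcal{O}(D_\nu)$ with curvatures supported in $X\setminus M$, an arbitrary smooth Hermitian metric $h_L$ on $N_\mathcal{F}$, and invoke the $\pa\opa$-lemma to obtain a smooth real function $\psi$ on $X$ with
\[
i\Theta_{h_L}=-\sum_{\nu}\Re(\operatorname{Res}_{D_\nu}(\nabla))\,i\Theta_{h_\nu}+i\pa\opa\psi,
\]
so that $h_0:=h_L e^{-\psi}$ is a smooth Hermitian metric on $N_\mathcal{F}$ flat on a neighborhood of $M$. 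The contradiction is then provided by the obstruction of \cite{adachi-biard} against the existence of such a flat Hermitian metric on the normal bundle of the Levi foliation of a real analytic Levi flat hypersurface with $1$-convex complement in a compact K\"ahler surface.

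The main obstacle is the extension step: making precise how the transversely affine data on $U$ extend across the exceptional set $A$ of the Remmert reduction of $X\setminus M$ to a globally defined integrable meromorphic connection on $N_\mathcal{F}$ whose residues are supported on a divisor disjoint from $M$. This is precisely the foliation extension argument attributed to Pereira in the acknowledgments, and it is what allows the more flexible residue formula of Theorem \ref{thm:residue_integrable} (with no requirement that singularities be polar) to replace the logarithmic residue formula of Ohtsuki that underlies the proof in \cite{adachi-biard}, thereby simplifying it.
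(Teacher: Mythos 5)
Your overall route is the same as the paper's: extend the foliation and its flat connection across the exceptional set of the $1$-convex complement, apply Theorem \ref{thm:residue_integrable} to localize $c_1(N_{\mathcal F})$ on a divisor disjoint from $M$, and use the $\pa\opa$-lemma to produce a Hermitian metric on $N_{\mathcal F}$ that is flat near $M$. Two points deserve comment. First, the extension step you flag as the main obstacle is exactly where the paper does real work: after the Bochner--Hartogs extension of the Levi foliation to $X\setminus A$, the foliation is extended across $A$ by taking two global sections $\omega_1,\omega_2$ of $\Omega^1_Y$ on the Remmert reduction $Y$ (Cartan's Theorem A), writing the foliation as $\omega_1+h\omega_2$ for a meromorphic $h$ on $Y\setminus B$, extending $h$ across the finite set $B$ by Levi's extension theorem, and pulling back; a Hironaka desingularization then replaces $A$ by a normal crossing divisor $D$, and the integrable connection coming from the transversely affine structure is extended only as a holomorphic connection on $\tilde X\setminus D$ (no meromorphy along $D$ is needed or claimed --- that is precisely the extra generality of Theorem \ref{thm:residue_integrable} being exploited, so your insistence on a globally meromorphic connection asks for more than the argument provides or requires).

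Second, and more substantively, your final step is mis-stated: there is no obstruction to the mere existence of a flat Hermitian metric on $N_{\mathcal F}$ near a Levi flat hypersurface with $1$-convex complement (such a metric exists whenever the localized class vanishes near $M$, independently of any transverse structure), and if there were, the transversely affine hypothesis would play no role at this stage. What the paper (following \cite{adachi-biard}) actually does is feed the flat metric back into the transversely affine structure: in foliated charts the leafwise norm $h_\alpha$ produces a $1$-form $\omega=\sqrt{h_\alpha}\,dt_\alpha$ defining the Levi foliation, flatness of the metric forces $d\omega=0$, hence $h_\alpha$ is independent of the transverse variable and $\rho=\sqrt{h_\alpha}\,u_\alpha$ is a defining function of $M$ whose nearby level sets $\rho^{-1}(\varepsilon)$ are smooth closed Levi flat hypersurfaces in $X\setminus M$; these violate the maximum principle for the strictly plurisubharmonic exhaustion of the $1$-convex complement. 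You should either reproduce this displacement argument or cite it precisely; as written, the claimed ``obstruction against the flat metric'' is not a correct statement and leaves the proof without its concluding contradiction.
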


A weaker form of this result was first proved for projective surfaces by Canales \cite{canales} using the residue formula of Cousin and Pereira \cite{cousin-pereira}*{Proposition 2.2}. 
Since the residue formula of Cousin and Pereira was known to be valid for projective manifolds only, we employed in \cite{adachi-biard} the residue formula of Ohtsuki \cite{ohtsuki}, requiring a logarithmic connection, and a theorem of Deligne \cite{deligne}*{Proposition II.5.4} to prove this theorem for compact K\"ahler surfaces.
Now this technical point can be simplified thanks to Theorem \ref{thm:residue_integrable}.

\begin{proof}
The proof is by contradiction. Assume that there exists a real analytic closed Levi flat hypersurface $M$ in $X$ such that $X \setminus M$ is 1-convex and the Levi foliation of $M$ is transversely affine. 

 Since $X\setminus M$ is assumed to be 1-convex, the Remmert reduction $\phi$ permits us to view $X\setminus M$ as a modification of a 2-dimensional normal Stein space $Y$ (see Section \ref{sect:mod}). The maximal compact analytic set $A\subset X\setminus M$ is then a modification of a finite set $B \subset Y$.
Using this fact, we can extend the Levi foliation of $M$ to a possibly singular holomorphic foliation $\mathcal{F}$ on $X \setminus A$ by a Bochner--Hartogs type extension theorem (see \cite{canales}*{\S 5} or \cite{Ivashkovich}*{\S 4.3}).

We claim that $\mathcal{F}$ can be extended as a holomorphic foliation all over $X$: 
Since $Y$ is a normal Stein space, Cartan's Theorem A guarantees the existence of two global sections $\omega_1, \omega_2$ of the cotangent sheaf $\Omega^1_Y$ such that $\omega_1\wedge \omega_2$ does not vanish identically (see for instance \cite{GunningRossi}*{p.\ 243}).
If $\omega_2$ defines $\mathcal{F}$ over $Y \setminus B$, the desired extension of $\mathcal{F}$ over $X$ is obtained by $\phi^*\omega_2$. 
If not, by pointwise linear algebra, we can find a meromorphic function $h$ on $Y \setminus B$ such that the meromorphic 1-form $\omega_1+h\omega_2$ defines the foliation $\mathcal{F}$ over $Y \setminus B$:
We work locally on a holomorphic chart with coordinate $(z,w)$ where we can write
\[
\omega_1 = \alpha_1 dz + \alpha_2 dw, \quad \omega_2 = \beta_1 dz + \beta_2 dw
\]
with holomorphic functions $\alpha_1, \alpha_2, \beta_1, \beta_2$ and we take a defining holomorphic 1-form $f dz + g dw$ for $\mathcal{F}$. Then one can find the unique meromorphic solution
\[
h = \frac{g\alpha_1 - f\alpha_2}{g \beta_1 - f \beta_2}.
\]
Since $B$ has codimension $\geq 2$, we can apply a version of Levi's extension Theorem (see, e.g., \cite{Ivashkovich}*{Corollary 1.5}) and extend $h$ into a meromorphic function $\tilde{h}$ to all of $Y$.
The meromorphic 1-form $\phi^*(\omega_1+\tilde{h}\omega_2)$ defines the desired extension of $\mathcal{F}$ over $X$ (see Sections 2.2 and 2.3). 

Now we use Hironaka's desingularization $\pi \colon \tilde{X} \to X$ (see Section \ref{sect:mod}), where $\tilde{X}$ is a compact K\"ahler surface and $\pi^{-1}(A)=D$ is a normal crossing divisor.
Using this map $\pi$, we pull back the extension of $\mathcal{F}$ over $X$ to a foliation $\tilde{\mathcal{F}}$ over $\tilde{X}$.
We identify $M$ with $\pi^{-1}(M)\subset \tilde{X}$.
From the real analyticity of $M$, $\tilde{\mathcal{F}}$ is transversely affine in a small neighborhood $W$ of $M$ (see \cite{canales}*{Proposition 6.3}) and its normal bundle $N_{{\mathcal{F}}}|_W$ admits an integrable connection (see \cite{adachi-biard}*{Lemma 12}). From a Bochner--Hartogs type extension theorem, this connection extends to an integrable connection of $N_{\tilde{\mathcal{F}}}|_{\tilde{X} \setminus D}$ (see \cite{adachi-biard}*{p.379}). 
Then, by Theorem \ref{thm:residue_integrable}, we have the decomposition of the first Chern class of $N_{\tilde{\mathcal F}}$,
\begin{eqnarray}\label{Chernclass}
c_1(N_{\tilde{\mathcal{F}}}) = - \sum_{\nu=1}^N \operatorname{Res}_{D_\nu}(\nabla) [D_\nu] \in H^{1,1}(\tilde{X})
\end{eqnarray}
where $D = \sum_{\nu=1}^N D_\nu$ is the irreducible decomposition of the divisor.

The rest of the proof is exactly as in \cites{adachi-biard}. We give a sketch for the reader's convenience. 
Fix an arbitrary smooth Hermitian metric $h_0$ of $N_{\tilde{\mathcal{F}}}$. For each $\nu$, we take smooth Hermitian metrics $h_\nu$ of the line bundle associated with $D_\nu$ so that the support of the curvature form $\Theta_{h_\nu}$ does not intersect with the neighborhood $W$ of $M$. 
From \eqref{Chernclass}, since $\tilde{X}$ is K\"ahler, we may use the $\pa\opa$-lemma to obtain a smooth real function $\psi$ on $\tilde{X}$  such that 
$$
\frac{i}{2\pi}\Theta_{h_0} = - \sum_{\nu=1}^N  \Re(\operatorname{Res}_{D_\nu} \nabla) \frac{i}{2\pi}\Theta_{h_\nu} + i\pa\opa \psi.
$$
Then $h := h_0 e^{2\pi\psi}$ is a smooth Hermitian metric of $N_{\tilde{\mathcal{F}}}$ 
whose curvature form
vanishes away from the supports of $\Theta_{h_\nu}$. 
Hence, the restriction of $h$ yields a flat Hermitian metric of $N_{\mathcal{F}}$ over $W$.

In each foliated chart $(U_\alpha, (z_\alpha, w_\alpha=t_\alpha+iu_\alpha))$ of $\mathcal{F}|_W$, we define $h_\alpha= h(\frac{\pa}{\pa w_\alpha}, \frac{\pa}{\pa w_\alpha})$. Because $\mathcal{F}$ is transversely affine on $M$, we have the well-defined smooth 1-form $\omega := \sqrt{h_\alpha(z_\alpha, t_\alpha)} dt_{\alpha}$ on $M$ defining the Levi foliation and such that
$$d\omega = \left(\frac{\pa \sqrt{h_\alpha(z_\alpha, t_\alpha)}}{\pa z_\alpha} dz_\alpha +
\frac{\pa \sqrt{h_\alpha(z_\alpha, t_\alpha)}}{\pa \ol{z}_\alpha} d\ol{z}_\alpha\right) \wedge dt_\alpha
= (\eta + \ol{\eta}) \wedge \omega,
$$
where $$\eta := \frac{\pa (\log \sqrt{h_\alpha(z_\alpha, t_\alpha)})}{\pa z_\alpha} dz_\alpha.$$
Because $\Theta_h=0$, applying Stokes' Theorem on $d\omega$, we deduce that $\omega$ is a closed 1-form on $M$. Hence, by its definition, $h_\alpha$ is independent of $w_\alpha$ and then $\rho(z_\alpha, w_\alpha) := \sqrt{h_\alpha(t_\alpha)} u_\alpha$ is a defining function of $M$ in $U_\alpha$. From $\rho$ we can construct displacements of $M$ in $X\setminus M$ that are smooth closed Levi flat hypersurfaces, $\rho^
{-1}(\varepsilon)$ for $|\varepsilon| \ll 1$.
This existence of smooth closed Levi flat hypersurfaces contradicts the 1-convexity of $X \setminus M$ due to the maximum principle for strictly plurisubharmonic function, and completes the proof.
\end{proof}

\subsection{Remark on minimal sets of foliations on complex surfaces}
By combining the arguments for Theorems \ref{thm:brunella} and \ref{thm:affine}, we can deduce the following result.
\begin{Theorem}
Let $X$ be a compact complex surface, and 
$\mathcal{F}$ a codimension one holomorphic foliation on $X$ with ample normal bundle $N_\mathcal{F}$. 
If $\mathcal{F}$ admits a non-trivial minimal set $\mathcal{M}$, then $\mathcal{F}$ is not transversely affine near $\mathcal{M}$.
\end{Theorem}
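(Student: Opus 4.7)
The plan is to argue by contradiction, combining the strategies from the proofs of Theorems \ref{thm:brunella} and \ref{thm:affine}. Suppose $\mathcal{F}$ is transversely affine on some open neighborhood $W$ of $\mathcal{M}$. Since $\mathcal{M}$ is non-trivial, it contains no singularity of $\mathcal{F}$, so Brunella's convexity result \cite{brunella2}*{Proposition 3.1} applies and $X \setminus \mathcal{M}$ is strongly pseudoconvex. The ampleness of $N_\mathcal{F}$ forces $X$ to be projective; because $\dim X = 2$, $\Sing(\mathcal{F})$ is a finite set, hence automatically contained in the maximal compact analytic subset $A$ of $X \setminus \mathcal{M}$. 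After Hironaka's desingularization $\pi \colon \tilde{X} \to X$ along $A$, the preimage $D := \pi^{-1}(A)$ is a simple normal crossing divisor in the projective (hence K\"ahler) manifold $\tilde{X}$, and $\tilde{\mathcal{F}} := \pi^*\mathcal{F}$ is a codimension one holomorphic foliation with $N_{\tilde{\mathcal{F}}} = \pi^*N_\mathcal{F}$ away from $D$.

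Next, I would extend the integrable connection. The transverse affine structure on $W$ provides an integrable holomorphic connection $\nabla$ on $N_\mathcal{F}|_W$, as in \cite{adachi-biard}*{Lemma 12}. I would promote this to an integrable connection on $N_{\tilde{\mathcal{F}}}|_{\tilde{X} \setminus D}$ via the Remmert reduction $\phi \colon X \setminus \mathcal{M} \to Y$, with $Y$ a two-dimensional normal Stein space that contracts $A$ to a finite set $B$. Since $W \cap A = \varnothing$, the open set $W \setminus \mathcal{M}$ maps biholomorphically under $\phi$ onto a neighborhood of infinity in $Y$. A Bochner--Hartogs-type argument on the two-dimensional Stein manifold $Y \setminus B$ extends the local connection forms of $\nabla$ inwards, and the remaining zero-dimensional set $B$ is crossed by Levi's extension theorem; this is precisely the recipe followed in \cite{adachi-biard}*{p.~379}. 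Pulling back along $\pi$ yields an integrable holomorphic connection, still denoted $\nabla$, on $N_{\tilde{\mathcal{F}}}|_{\tilde{X} \setminus D}$.

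With this connection at hand, Theorem \ref{thm:residue_integrable} localizes the first Chern class:
\[
c_1(N_{\tilde{\mathcal{F}}}) = -\sum_{\nu=1}^N \operatorname{Res}_{D_\nu}(\nabla)\,[D_\nu] \in H^{1,1}(\tilde{X}).
\]
The endgame is then identical to the one appearing in both earlier proofs of this section. Choose smooth Hermitian metrics $h_\nu$ on the line bundles $\mathcal{O}(D_\nu)$ whose curvature forms are supported in $\tilde{X} \setminus \pi^{-1}(\mathcal{M})$, and an arbitrary smooth Hermitian metric $h_L$ on $N_{\tilde{\mathcal{F}}}$. Since $\tilde{X}$ is K\"ahler, the $\partial\overline{\partial}$-lemma supplies a smooth real function $\psi$ for which $h_0 := h_L e^{-\psi}$ has vanishing curvature on a neighborhood of $\pi^{-1}(\mathcal{M})$, descending to a flat Hermitian metric on $N_\mathcal{F}$ on a neighborhood of $\mathcal{M}$ in $X$.

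To conclude, the ampleness of $N_\mathcal{F}$ produces a smooth Hermitian metric $h_+$ on $N_\mathcal{F}$ with strictly positive curvature. The ratio $\varphi := -\log(h_+/h_0)$ is smooth and leafwise strictly plurisubharmonic near $\mathcal{M}$. Compactness of $\mathcal{M}$ gives a point $p \in \mathcal{M}$ at which $\varphi|_\mathcal{M}$ attains its maximum; by $\mathcal{F}$-invariance and minimality of $\mathcal{M}$, the whole leaf $\mathcal{L}$ through $p$ lies in $\mathcal{M}$, so $\varphi|_\mathcal{L}$ has an interior maximum at $p$, contradicting the maximum principle for strictly plurisubharmonic functions. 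The principal difficulty is the second step: constructing an integrable holomorphic extension of $\nabla$ across the compact analytic set $A$, since in dimension two one cannot invoke Theorem \ref{thm:extension} to extend the curvature form as in the proof of Theorem \ref{thm:brunella}; here the transverse affine hypothesis is used precisely to furnish integrability directly on $W$, after which Stein-theoretic extension on the Remmert reduction takes over.
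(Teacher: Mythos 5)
Your proposal is correct and follows essentially the same route as the paper's (very terse) sketch: contradiction, Brunella's convexity for $X\setminus\mathcal{M}$, extension of the integrable connection furnished by the transverse affine structure across the Remmert reduction as in the proof of Theorem \ref{thm:affine}, localization of $c_1(N_{\tilde{\mathcal{F}}})$ via Theorem \ref{thm:residue_integrable}, and the flat-versus-positive metric comparison with the maximum principle as in the proof of Theorem \ref{thm:brunella}. Your closing remark correctly identifies the reason the surface case needs the affine hypothesis rather than Theorem \ref{thm:extension}, which is exactly the point of combining the two earlier arguments.
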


\begin{proof}[Sketch of proof]
Again, the proof is by contradiction. Assume that there exists a non-trivial minimal set $\mathcal{M}$ around which $\mathcal{F}$ is transversely affine. 
Since $\mathcal{F}$ is transversely affine, we have an integrable connection $\nabla$ of $N_\mathcal{F}$ around $\mathcal{M}$. 
From Brunella's convexity result \cite{brunella2}*{Proposition 3.1}, $X \setminus \mathcal{M}$ is strongly pseudoconvex.
We can see that, as in the proof of Theorem \ref{thm:affine}, $\nabla$ extends to an integrable connection over $X \setminus A$
where $A$ is the maximal compact analytic set of $X \setminus M$.
We can apply the arguments in the proof of Theorem \ref{thm:brunella} to obtain a continuous function on $\mathcal{M}$ which is leafwise strictly plurisubharmonic. This yields a contradiction. 
\end{proof}

\begin{Remark}
Minimal sets near which the foliation is transversely affine can exist when the normal bundle $N_\mathcal{F}$ is not ample.
There is a real-analytic Levi flat hypersurface with transversely affine foliation in hyperbolic Inoue surfaces. See, e.g., \cite{adachi-biard}*{Example 14} for its detail.
\end{Remark}

\begin{bibdiv}
\begin{biblist}
\bib{adachi-biard}{article}{
   author={Adachi, Masanori},
   author={Biard, S\'everine},
   title={On Levi flat hypersurfaces with transversely affine foliation},
   journal={Math. Z.},
   volume={301},
   date={2022},
   pages={373--383},
}
\bib{adachi-brinkschulte}{article}{
   author={Adachi, Masanori},
   author={Brinkschulte, Judith},
   title={Dynamical aspects of foliations with ample normal bundle},
   journal={Indiana. Univ. Math. J.},
   status={to appear},
}
\bib{brunella1}{article}{
   author={Brunella, Marco},
   title={On the dynamics of codimension one holomorphic foliations with
   ample normal bundle},
   journal={Indiana Univ. Math. J.},
   volume={57},
   date={2008},
   number={7},
   pages={3101--3113},
}
\bib{brunella2}{article}{
   author={Brunella, Marco},
   title={Codimension one foliations on complex tori},
   journal={Ann. Fac. Sci. Toulouse Math. (6)},
   volume={19},
   date={2010},
   number={2},
   pages={405--418},
}
\bib{brunella-perrone}{article}{
   author={Brunella, Marco},
   author={Perrone, Carlo},
   title={Exceptional singularities of codimension one holomorphic foliations},
   journal={Publ. Mat.},
   volume={55},
   date={2011},
   number={2},
   pages={295--312},
}
\bib{canales}{article}{
   author={Canales Gonz\'{a}lez, Carolina},
   title={Levi-flat hypersurfaces and their complement in complex surfaces},
   journal={Ann. Inst. Fourier (Grenoble)},
   volume={67},
   date={2017},
   number={6},
   pages={2323--2462},
}

\bib{Cerveau93}{article}{
author={Cerveau, D.},
title={Minimaux des feuilletages alg\'ebriques de $\mathbb{CP}^n$},
journal={ Ann. Inst. Fourier},
volume={43},
date= {1993}, 
pages={1535--1543},
url={http://www.numdam.org.docelec.univ-lyon1.fr/articles/10.5802/aif.1382/},
}

\bib{cousin-pereira}{article}{
   author={Cousin, Ga\"{e}l},
   author={Pereira, Jorge Vit\'{o}rio},
   title={Transversely affine foliations on projective manifolds},
   journal={Math. Res. Lett.},
   volume={21},
   date={2014},
   number={5},
   pages={985--1014},
}
\bib{deligne}{book}{
   author={Deligne, Pierre},
   title={\'{E}quations diff\'{e}rentielles \`a points singuliers r\'{e}guliers},
   series={Lecture Notes in Mathematics, Vol. 163},
   publisher={Springer-Verlag, Berlin-New York},
   date={1970},
   note={Erratum, April 1971},
   url={https://www.springer.com/gp/book/9783540051909},
}

\bib{demailly-book}{book}{
  author={Demailly, Jean-Pierre},
  title={Complex Analytic and Differential Geometry},
  status={a book available at the author's webpage}
}

\bib{GunningRossi}{book}{
author={Gunning, R.C.},
author={Rossi, H.},
title={Analytic Functions of Several Complex Variables},
publisher={American Mathematical Society, Providence},
date={1965},

}
\bib{Ivashkovich}{article}{
   author={Ivashkovich, S.},
   title={Extension properties of complex analytic objects},
   journal={Max-Planck-Institut f\"ur Mathematik Preprint Series},
   volume={15},
   date={2013},
   url={https://www.mpim-bonn.mpg.de/preblob/5365},
}

\bib{linsneto}{article}{
   author={Lins Neto, Alcides},
   title={A note on projective Levi flats and minimal sets of algebraic
   foliations},
   journal={Ann. Inst. Fourier (Grenoble)},
   volume={49},
   date={1999},
   number={4},
   pages={1369--1385},
}
\bib{ohsawa}{article}{
   author={Ohsawa, Takeo},
   title={Addendum to: ``A reduction theorem for cohomology groups of very
   strongly $q$-convex K\"{a}hler manifolds''},
   journal={Invent. Math.},
   volume={66},
   date={1982},
   number={3},
   pages={391--393},
}
\bib{ohsawa-isolated-sing}{incollection}{
	author={Ohsawa, Takeo},
	title={On the $L^2$ cohomology groups of isolated singularities},
	booktitle={in: Progress in Differential Geometry, Advanced Studies in Pure Math. },
	volume={22},
	date={1993},
	pages={247--263},
}
\bib{ohsawa-report}{incollection}{
   author={Ohsawa, Takeo},
   title={A report on isolated singularities by transcendental methods},
   booktitle={in: CR geometry and overdetermined systems, Advanced Studies in Pure Math. },
   volume={25},
   date={1997},
   pages={276--284},
}

\bib{ohsawa-takegoshi0}{article}{
	author={Ohsawa, Takeo},
	author={Takegoshi, Kensh\={o}},
	title={On the extension of $L^2$ holomorphic functions},
	journal={Math. Z.},
	volume={195},
	date={1987},
	pages={197--204},
}
\bib{ohtsuki}{article}{
   author={Ohtsuki, Makoto},
   title={A residue formula for Chern classes associated with logarithmic
   connections},
   journal={Tokyo J. Math.},
   volume={5},
   date={1982},
   number={1},
   pages={13--21},
}
\bib{Pereira}{article}{
   author={Pereira, Jorge Vit\'orio},
   title={Closed meromorphic 1-forms},
   status={Preprint},
   eprint={arXiv:2206.09745},
}
\bib{Pereira-Pirio}{article}{
   author={Pereira, Jorge Vit\'{o}rio},
   author={Pirio, Luc},
   title={The classification of exceptional CDQL webs on compact complex
   surfaces},
   journal={Int. Math. Res. Not. IMRN},
   date={2010},
   number={12},
   pages={2169--2282},
}
\bib{Peternell}{article}{
   author={Peternell, Th.},
   title={Pseudoconvexity, the Levi problem and vanishing theorems},
   conference={
      title={Several complex variables, VII},
   },
   book={
      series={Encyclopaedia Math. Sci.},
      volume={74},
      publisher={Springer, Berlin},
},
date={1994},
   pages={221--257},
   url={https://link.springer.com/chapter/10.1007/978-3-662-09873-8_6},
}
\bib{Saito}{article}{
   author={Saito, Kyoji},
   title={Theory of logarithmic differential forms and logarithmic vector
   fields},
   journal={J. Fac. Sci. Univ. Tokyo Sect. IA Math.},
   volume={27},
   date={1980},
   number={2},
   pages={265--291},
}
\bib{vSS}{article}{
   author={van Straten, D.},
   author={Steenbrink, J.},
   title={Extendability of holomorphic differential forms near isolated hypersurface singularities},
   journal={Abh. Math. Sem. Univ. Hamburg},
   volume={55},
   date={1985},
   pages={97--110},
}
\bib{Weil}{article}{
   author={Weil, Andr\'{e}},
   title={Sur la th\'{e}orie des formes diff\'{e}rentielles attach\'{e}es \`a une vari\'{e}t\'{e}
   analytique complexe},
   journal={Comment. Math. Helv.},
   volume={20},
   date={1947},
   pages={110--116},
}
\end{biblist}
\end{bibdiv}
\end{document}